\newtheorem{theorem}{Theorem}[section]
\newtheorem{lemma}[theorem]{Lemma}
\newtheorem{prop}[theorem]{Proposition}
\newtheorem{defn}[theorem]{Definition}
\newtheorem{rem}[theorem]{Remark}
\newcommand{\R}{\mathbb R}
\newcommand{\Z}{\mathbb Z}
\newcommand{\LL}{\mathbb{L}}
\newcommand{\OO}{\mathbb{O}}
\mathchardef\flat="115B
\def\wt#1{\widetilde{#1}}
\def\wb#1{\overline{#1}}
\begin{document}

\title{Foliations on non-metrisable manifolds II: contrasted behaviours}

\author{Mathieu Baillif}
\address{Universit\'e de Gen\`eve, Section de Math\'ematiques, Switzerland.}
\email{Mathieu.Baillif@unige.ch}

\author{Alexandre Gabard}
\address{Universit\'e de Gen\`eve, Section de Math\'ematiques, Switzerland.}
\email{alexandregabard@hotmail.com}

\author{David Gauld}
\address{Department of Mathematics, University of Auckland, New Zealand}
\email{d.gauld@auckland.ac.nz}
\thanks{The third author was supported in part by the Marsden Fund Council from Government funding, administered by the Royal Society of New Zealand.}

\subjclass[2000]{Primary 57N99, 57R30, 37E35}

\date{\today}

\keywords{Non-metrisable manifolds, Long pipes, Foliations}

\begin{abstract}
This paper, which is an outgrowth of \cite{BGG}, continues the study of dimension 1 foliations on non-metrisable manifolds emphasising some anomalous behaviours. We exhibit surfaces with various extra properties like Type I, separability and simple connectedness, and a property which we call `squat,' which do not admit foliations even on removal of a compact (or even Lindel\"of) subset. We exhibit a separable surface carrying a foliation in which all leaves except one are metrisable but at the same time we prove that every non-metrisable leaf on a Type I manifold has a saturated neighbourhood consisting only of non-metrisable leaves. Minimal foliations are also considered. Finally we exhibit simply connected surfaces having infinitely many topologically distinct foliations.
\end{abstract}

\maketitle


\section{Introduction}\label{sec1}


In this paper we shall be concerned mainly with $1$-dimensional foliations on surfaces, so the leaves will be $1$-manifolds. Recall that there are only four of them: the circle $\mathbb{S}^1$ and the real line $\R$, which are metrisable, and the long ray $\LL_+$ and long line $\LL$, which are not. We thus call a leaf {\em short} if it is $\mathbb{S}^1$ or $\R$, and {\em long} otherwise. The closed long ray $\LL_{\ge 0}$ is the space $\omega_1\times[0,1)$, with the lexicographic order topology. We shall sometimes view $\omega_1$ as a subset of $\LL_{\ge 0}$ by identifying $\alpha\in\omega_1$ with $(\alpha,0)\in \LL_{\ge 0}$. The open long ray $\LL_+$ is $\LL_{\ge 0}\setminus\{0\}$. The long line $\LL$ is the double of $\LL_{\ge0}$: recall that the \emph{double} $2M$ of a manifold $M$ with boundary is the manifold with no boundary obtained from two copies of $M$ by identifying corresponding points on the respective boundaries; more formally the quotient of $M\times\{0,1\}$ via the equivalence relation $\sim$ generated by $(x,0)\sim(x,1)$ when $x\in\partial M$. Unlike $\R$, which is homeomorphic to $\R_+=(0,\infty)$, $\LL$ is not homeomorphic to $\LL_+$.

Our definitions of foliation, foliated chart, plaque, leaf topology and dimension and codimension of a foliation are exactly as in \cite[Definition 1.1]{BGG1}.
We shall use the word ``foliation'' to refer to foliations of dimension 1.

\medskip
Let us now summarize and motivate the results of the present paper: topics covered include infoliability, stability and minimality.

\medskip
\noindent$\bullet$ {\sc Separable surfaces lacking foliations.} 
Recall that an open (i.e. non-compact) metrisable manifold always possesses a codimension one foliation (hence also a one-dimensional foliation by Siebenmann's transversality, \cite{Siebenmann72}). In \cite[Corollary~1.3]{BGG} we listed many examples of open non-metrisable surfaces lacking foliations. (In fact \cite[Corollary~6.5]{BGG1} afforded such examples in any dimension $\ge 2$.) All the examples obtained so far were non-separable, so we pay special attention here to exhibit a separable example:

\begin{theorem}\label{thmsepnonfol}
  There is a separable squat non-metrisable surface $S$ such that for any compact subset $K\subset S$, $S\setminus K$ lacks a foliation.
\end{theorem}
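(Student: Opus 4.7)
The plan is to construct $S$ by a Pr\"ufer-type thickening of the open upper half-plane $H=\{(x,y)\in\R^2:y>0\}$: for each real $r$ we attach a metrisable collar $E_r$ along a strip above $(r,0)$, with the standard Pr\"ufer topology ensuring that every basic neighbourhood of a point in $E_r$ contains a wedge in $H$. Separability is then immediate, since $H\cap\Q^2$ is dense in all of $S$; non-metrisability comes from an uncountable closed discrete family obtained by picking a point in each $E_r$ away from $H$. The property ``squat'' (as defined later in the paper) will be verified directly from the construction, by ensuring that $S$ contains no copy of $\LL_+$ and that each $E_r$, together with its footprint in $H$, is uniformly bounded.

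The key ingredient is the design of each $E_r$ so that it locally forbids foliations: any one-dimensional foliation defined on a neighbourhood of $E_r$ in $S$ should be impossible. Following the pattern of the non-foliability examples of \cite{BGG,BGG1}, one arranges $E_r$ as a twisted metrisable piece whose Pr\"ufer gluing to $H$ forces any continuous line field to satisfy two incompatible constraints along a prescribed arc inside $E_r$. I expect this to rely on a non-orientability-type obstruction concentrated in each collar, in the spirit of the bristle-based constructions used to build non-foliable separable surfaces. The main technical step will be realising such a local obstruction while keeping the overall space a Hausdorff $2$-manifold, and ensuring that the obstruction depends only on the germ of the foliation near $E_r$, not on any global data.

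Given any compact $K\subset S$, compactness (hence Lindel\"ofness) of $K$ forces $K$ to intersect only countably many of the collars $E_r$, because the family $\{E_r\setminus H\}_{r\in\R}$ is pairwise disjoint and closed discrete in $S\setminus H$; a compact subset can meet only countably many members of such a family. Consequently uncountably many $E_r$ lie entirely in $S\setminus K$, and each of them still carries its local obstruction, preventing the existence of any foliation on $S\setminus K$. The principal obstacle is thus the first step: producing a local, metrisable, squat-friendly obstruction to foliations that is robust under all choices of foliation away from $E_r$, while simultaneously keeping the global surface a separable, squat, non-metrisable $2$-manifold.
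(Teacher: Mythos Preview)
Your proposal has a genuine gap at the step you yourself flag as ``the principal obstacle'': there is no purely local obstruction of the kind you describe. In any Pr\"ufer-type construction, each collar $E_r$ together with the wedge of $H$ it sees is an open, second-countable (hence metrisable) surface, and open metrisable surfaces always admit one-dimensional foliations. So no individual $E_r$ can forbid foliations on its own neighbourhood, regardless of how you twist it or introduce non-orientability; a M\"obius band, a cross-cap, a handle---all are foliable. Any obstruction that ``depends only on the germ of the foliation near $E_r$, not on any global data'' is therefore impossible in this setting. Your reduction to ``uncountably many surviving local obstructions'' collapses once this is observed.

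The paper's construction is genuinely global and uses \emph{two} flavours of attachment simultaneously: one takes disjoint $A,B\subset\R$, Pr\"uferizes at points of $A$, Moorizes at points of $B$, and then doubles. The obstruction comes from the interplay: for $r\in A$ the bridging interval $I_r$ is forced (after discarding countably many exceptions) to be a leaf, while for $r\in B$ the Moorized spike forces the leaf through it to run through a thin triangle $W_r(n,\theta)$ based at $r$. One then shows that the set of $r$ with this triangle property is stable under closure (up to widening $\theta$), and a Baire-category argument---using that $B$ is a dense $G_\delta$ in some open $U$ and that $A$ is uncountable in every subinterval of $U$---produces an $r\in A$ whose leaf must simultaneously be $I_r$ and pierce a triangle transverse to $I_r$, a contradiction. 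The essential idea you are missing is precisely this tension between two different uncountable families of attachments, resolved by a density/Baire argument; nothing local to a single collar will do.
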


A space $X$ is {\em squat} provided any continuous map $f:\LL_+\to X$ is eventually constant, that is
there is some $\alpha\in\LL_+$ such that $f$ is constant above $\alpha$. See \cite{BGG} for more details, in particular Lemma 2.2 and Theorem 4.1.

The construction starts as a variation of the famous Pr\"ufer manifold (combined with R.\,L. Moore's trick of folding boundary components), but interestingly the proof of the infoliability, though elementary, turns out to be longer than expected, and is done in Section~\ref{mixed_Pruefer_Moore}.

In Section~\ref{sep-simplconn-nofol} we provide a second example of a separable surface lacking a foliation, this time simply connected: the construction is more complicated but the proof simpler.

\medskip
\noindent$\bullet$ {\sc Surfaces lacking foliations after puncturing.} In \cite{BGG} the most basic examples of surfaces lacking foliations were bagpipes constructed from a genus $g$ surface by inserting a finite number of long pipes with a cylindrical structure ${\mathbb S}^1\times {\mathbb L}_{\ge0}$. We noticed in \cite[Proposition~6.2]{BGG1} that the removal of a single point renders all these surfaces suddenly foliable. Hence it is natural to seek an open surface which is \emph{robustly infoliable}, i.e. lacking foliations even after puncturing.  The example of Theorem \ref{thmsepnonfol} is a separable one; an $\omega$-bounded one is given by:

\begin{theorem}\label{thmpipenonfol}
  There is a long pipe $P$ such that for any compact subset $K\subset P$, $P\setminus K$ lacks a foliation.
\end{theorem}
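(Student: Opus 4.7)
The plan is to construct a long pipe $P$ by transfinite induction so that, on every final segment, both branches of the rigidity dichotomy for foliations on cylindrical tails established in \cite{BGG} are blocked. First I would reduce the problem: since $P$ is $\omega$-bounded and Type I via $P = \bigcup_{\alpha<\omega_1}X_\alpha$, any compact $K\subset P$ is covered by finitely many $X_{\alpha_i}$, hence $K\subset X_\beta$ for some $\beta<\omega_1$. Thus $P\setminus X_\beta\subset P\setminus K$, and any foliation of $P\setminus K$ restricts to one of $P\setminus X_\beta$; it therefore suffices to build $P$ so that $P\setminus X_\beta$ is unfoliable for every $\beta<\omega_1$.

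The construction starts from an open $2$-disk $X_0$ and attaches an annulus $A_\alpha\cong\mathbb{S}^1\times[0,1]$ at each successor stage, keeping each $X_\alpha$ a disk with boundary circle $\partial X_\alpha\subset X_{\alpha+1}$. The freedom lies in the attaching data: at a stationary set $S\subset\omega_1$ of limit stages $\lambda$, I would arrange, along a chosen cofinal sequence $\lambda_n\uparrow\lambda$, for the boundary circles $\partial X_{\lambda_n}$ to accumulate in a \emph{twisted} pattern through the limit region $X_\lambda\setminus\bigcup_n X_{\lambda_n}$, for instance by prescribing Dehn twists with unbounded winding. The aim is a local obstruction at each $\lambda\in S$ to both a cofinally saturated circle foliation and a transverse long leaf passing through $\lambda$.

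Given a foliation $\mathcal F$ on some $P\setminus X_\beta$, I would identify inside it a cofinal cylindrical subpipe so that the rigidity dichotomy applies, yielding the alternative. In the \emph{asymptotically horizontal} case, two saturated circles $\partial X_{\mu_1}$ and $\partial X_{\mu_2}$ would cobound a foliated annulus saturated by parallel circle leaves, forcing an isotopy across any intermediate $\lambda\in S$ that contradicts the twist. In the \emph{ultimately vertical} case, a long leaf would provide an injective $\mathbb L_+\to P\setminus X_\beta$ locally transverse to every $\partial X_\alpha$, which is again obstructed at cofinally many $\lambda\in S$. Both branches fail, so no such $\mathcal F$ can exist.

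The principal obstacle I anticipate is designing the twists at the stationary set so as to obstruct \emph{both} branches of the dichotomy simultaneously while preserving the long-pipe structure of $P$ (each $X_\alpha$ must remain a disk, with circular boundary in $X_{\alpha+1}$); a secondary issue is that the rigidity dichotomy of \cite{BGG} is formulated for literal products $M\times\mathbb L_+$, so one must verify that each tail of $P$ contains a suitable cylindrical subpipe where the theorem applies, or else extend the dichotomy to general (non-cylindrical) long pipes.
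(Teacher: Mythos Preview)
Your high-level framework is right and matches the paper: reduce to showing every tail $P\setminus\overline{X_\beta}$ is unfoliable (you want $\overline{X_\beta}$ rather than $X_\beta$, so that the tail is open and foliations restrict), then invoke a long/short dichotomy on leaves and block both branches. But the concrete mechanism you propose---Dehn twists with unbounded winding at a stationary set of limit stages---is not shown to do either job, and in the horizontal branch it almost certainly cannot. An annulus foliated by boundary-parallel circles is insensitive to how its two boundary circles are matched to the adjacent pieces: any self-homeomorphism of $\mathbb{S}^1$ extends over the annulus carrying the circle foliation along with it. So ``twisting'' the attaching data gives no obstruction to an asymptotically-horizontal (circle-leaf) foliation. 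Your own acknowledged obstacle about the dichotomy being stated for literal products is also real: on a general pipe you have no canonical ``horizontal slices'' to which the \cite{BGG} dichotomy refers, and you offer no substitute.

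The paper resolves both difficulties with two specific devices you are missing. For long leaves it does \emph{not} build a generic pipe by stacking annuli; it starts from the first octant $\mathbb{O}=\{0\le y\le x\}\subset\mathbb{L}^2$ and identifies the horizontal boundary with the diagonal. The resulting pipe $C$ inherits the octant's rigidity: \cite[Proposition~6.1]{BGG} (proved for the octant, not for arbitrary pipes) forces any foliation of a tail of $C$ to be horizontal or vertical, and after the boundary identification neither option can be realised with long leaves. For short leaves the paper does not twist---it inserts, at every limit level $\lambda$, a genuine half-open interval $I_\lambda$ as a ``spike'' so that the level set at height $\lambda$ becomes a circle with a segment attached, i.e.\ not a $1$-manifold. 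Then \cite[Proposition~4.2]{BGG} says a short-leaf foliation of any tail must saturate club-many level sets; but a non-manifold level set cannot be a union of leaves, contradiction. The spikes are arranged carefully (via the map $g$ in Figure~\ref{noFoliation}) so that the ambient space remains a bona fide long pipe. These two ingredients---the octant pipe to kill long leaves and the inserted spikes to kill short ones---are the content of the proof, and neither is approximated by your Dehn-twist proposal.
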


See Section~\ref{ContortedPipe} for the construction and the definition of long pipe.

\medskip
\noindent$\bullet$ {\sc Reeb's stability.}
Inspecting the example of the punctured long plane ${\mathbb L}^2\setminus\{0\}$ foliated horizontally, one might guess that long-line leaves satisfy a form of local stability analogous to the Reeb stability theorem. 
By \emph{local stability} is meant that leaves keep the same topological type under small perturbations. Here we report the failure of any form of local stability by giving an example of Nyikos, cf.~\cite[Examples~6.3 and 6.7]{Nyikos90}. 

\begin{theorem}\label{thmonelongleaf} 
There is a separable surface carrying a foliation in which all leaves except one are real lines, the exception being the long ray. Thus neighbours of long leaves need not be long.
\end{theorem}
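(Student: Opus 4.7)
I would prove Theorem~\ref{thmonelongleaf} by constructing the surface and foliation explicitly, following Nyikos. The construction begins with the plane $P = \R^2$ carrying its standard horizontal foliation (all leaves homeomorphic to $\R$) and adjoins an exceptional long leaf $L\cong\LL_+$ in a Pr\"ufer-type way. As a set, $S = P \sqcup L$; the topology declares $P$ to be open with its Euclidean topology, while a basic neighborhood of a point $\alpha\in L$ is an open subarc $J\subset L$ around $\alpha$ together with an open strip $W_J\subset P$ accumulating on $J$, chosen so that $W_J\cup J$ is homeomorphic to an open disk in $\R^2$ with $J$ going to a horizontal segment. This is essentially the Pr\"ufer trick, but with the extra boundary indexed by $\LL_+$ instead of by $\R$.

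I would next check that $S$ is a connected Hausdorff surface: Euclidean charts come from $P$ and from the disks $W_J\cup J$, while Hausdorffness between distinct points $\alpha,\beta\in L$ is ensured by taking the approach strips $W_J$ in $P$ pairwise essentially disjoint, exactly as in the Pr\"ufer construction. The foliation $\mathcal{F}$ is then the horizontal foliation of $P$ extended in the obvious way by declaring the arcs $J\subset L$ to be plaques; by construction the leaf through any point of $P$ is a single horizontal line (hence $\R$), while the leaf through any point of $L$ is the union of all such $J$ over $\alpha\in\LL_+$, equipped with its leaf topology, and is homeomorphic to $\LL_+$.

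The final and most delicate step is separability. Since $P\cong\R^2$ possesses a countable dense subset $D$, it suffices to show $P$ is dense in $S$, which is immediate from the shape of the basic neighborhoods of points of $L$: each contains the nonempty open set $W_J\subset P$. The main obstacle is to arrange the approach strips $W_J$ so as to achieve simultaneously (i) pairwise disjointness sufficient for Hausdorff separation of distinct $\alpha,\beta\in L$, (ii) coherent concatenation of the arcs $J$ across all $\omega_1$-many ordinals so that the exceptional leaf is genuinely $\LL_+$ (and not, say, some short cylinder or the doubled $\LL$), and (iii) each $W_J$ an honest open subset of $P$ making the local chart $W_J\cup J\to\R^2$ a homeomorphism. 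These constraints are reconciled by the Pr\"ufer-style choice of approach strips in $P$, adapted so that the boundary parameter runs along the long ray instead of along~$\R$.
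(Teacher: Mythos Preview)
Your overall shape is right---take $S=P\sqcup L$ with $P$ an open planar piece and $L\cong\LL_+$ the exceptional leaf---but the phrase ``exactly as in the Pr\"ufer construction'' conceals the whole difficulty, and your proposal never confronts it. In Pr\"ufer the added boundaries $\R_x$ are indexed by $x\in\R$, and disjoint basic neighborhoods for points in distinct $\R_x$ come for free because their sectors are based at \emph{distinct} points of the missing axis. Here you must attach a single connected copy of $\LL_+$, so $\omega_1$ arcs $J$ each need a nonempty open approach region $W_J\subset P$; but $P\cong\R^2$ is separable (hence ccc), so there is no family of $\omega_1$ pairwise disjoint nonempty open sets to play the role of the Pr\"ufer sectors. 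Your constraint~(i) is therefore not resolvable by the Pr\"ufer mechanism you invoke, and the proposal offers no alternative device. Saying the strips are ``pairwise essentially disjoint'' does not help without specifying \emph{how}; nothing in the Pr\"ufer picture suggests an answer when the index set is $\LL_+$.

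The paper's construction (due to Nyikos) supplies the missing idea, and it is not a routine adaptation of Pr\"ufer. One fixes an $\omega_1$-scale $\langle f_\alpha\rangle_{\alpha<\omega_1}$ of increasing functions $\omega\to\omega$ with $f_\alpha<^* f_\beta$ for $\alpha<\beta$, extends it to all $x\in\LL_{\ge 0}$, and converts each $f_x$ into a symmetric curve $g_x:\R\to(-\infty,0]$ with $g_x(0)=0$. All of $L$ is then attached at a \emph{single} deleted point: a basic neighborhood of $\alpha\in L$ is the horn $\{(s,t):|s|<\varepsilon,\ g_x(s)<t<g_y(s)\}$ together with $(x,y)\subset\LL_{\ge 0}$, for $x<\alpha<y$. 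These horns are far from globally disjoint; the $<^*$-order makes the graphs $g_x$ eventually nested as $|s|\to 0$, and that nesting is what delivers both Hausdorffness and honest foliated charts $U(\infty,0,\alpha)\to\R\times(0,1)$. In particular the planar piece must be punctured, $P=\R^2\setminus\{(0,0)\}$, not $\R^2$ as you wrote: the origin is the common cusp of all the horns and cannot be separated from any point of $L$.
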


This manifold of Nyikos deserves special attention (regardless of our biased foliated perspective), as it unites the two simplest constructions of non-metrisable manifolds: namely Cantor's long ray (or line) versus the Pr\"ufer manifold, combining elements of the two classes of manifolds (`long' and `separable' ones). The surface is a bordification of the upper half-plane by the long ray (or the long line), so a surface-with-boundary whose interior is ${\R^2}$ and whose boundary is long (either one of the two possible long 1-manifolds). We shall construct in both cases a foliation admitting the long boundary as a leaf, but such that all other leaves are real lines. Doubling this surface provides a foliated surface corrupting the stability of long leaves (see Section~\ref{Nyikos}). So intuitively the Nyikos technique allows one to pick an individual leaf in the trivial foliation on the plane ${\mathbb R}^2$ and to make it long (in one or both directions).

However, stability in the Reeb style appears in the Type I setting for long leaves. Recall from \cite[2.10]{Nyikos84} that a space $X$ is of \emph{Type I} if it is the union of an $\omega_1$-sequence $\langle U_\alpha\ :\ \alpha<\omega_1\rangle$ of open subspaces such that each $\overline{U_\alpha}$ is Lindel\"of and $\overline{U_\alpha}\subset U_\beta$ whenever $\alpha<\beta<\omega_1$.

\begin{prop}\label{long-stability}
Suppose that a Type I manifold of dimension at least $2$ has a foliation in which there is at least one long leaf. Then there is a neighbourhood of the long leaf which intersects only long leaves.
\end{prop}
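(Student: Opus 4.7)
My plan is to build an open neighbourhood of $L$ by stringing foliated charts along $L$ in successive bands of the Type~I exhaustion, and then to argue that any leaf meeting this neighbourhood must thread through cofinally many bands and hence be long.

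Since a long $1$-manifold is $\omega$-bounded, $L$ is closed in $M$. For each $\alpha<\omega_1$, $L\cap\overline{U_\alpha}$ is closed in the Lindelöf space $\overline{U_\alpha}$, hence Lindelöf; the open cover of $\LL_+$ (or $\LL$) by initial long segments then forces $L\cap\overline{U_\alpha}$ to be bounded in the long direction. Identifying $L$ with $\LL_+$ or $\LL$, this yields a function $f\colon\omega_1\to\omega_1$ bounding $L\cap\overline{U_\alpha}$. Its set of closure ordinals $C=\{\alpha<\omega_1:f(\beta)<\alpha\text{ for all }\beta<\alpha\}$ is a club, and for each $\alpha\in C$ the point $r_\alpha\in L$ at parameter $\alpha$ satisfies $r_\alpha\notin\overline{U_\beta}$ for every $\beta<\alpha$.

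For each $\alpha\in C$, I would cover the compact arc of $L$ leading up to $r_\alpha$ by finitely many foliated charts whose closures sit inside a band of the form $U_{\alpha+1}\setminus\overline{U_{\alpha^-}}$, where $\alpha^-\in C$ is a suitable earlier element of $C$ (an immediate predecessor for successor stages, any cofinal one for limit stages). Let $W_\alpha$ denote their union; covering the initial metrisable segment of $L$ separately, the open set $V=\bigcup_{\alpha\in C}W_\alpha$ is a neighbourhood of $L$.

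Suppose now a leaf $L'$ meeting $V$ were short, so $L'\subset U_\gamma$ for some $\gamma$. For each $\alpha\in C$ with $\alpha^- > \gamma$, the band $W_\alpha$ is disjoint from $\overline{U_\gamma}$, and hence from $L'$. On the other hand, $L'$ meets some $W_{\alpha_0}$ in a plaque that, since the foliation is $1$-dimensional, is an arc exiting its chart at both endpoints; by arranging consecutive $W_\alpha$'s to overlap near the common transition points $r_\alpha$, this plaque continues into the chart $W_{\alpha_0^+}$ indexed by the next element of $C$, and a transfinite iteration along $C$ forces $L'$ to meet $W_\alpha$ for cofinally many $\alpha$, a contradiction. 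The delicate step — and the main obstacle — is making the transfinite plaque-continuation precise at limit ordinals of $C$, where one must verify that the accumulating plaques of $L'$ in $W_\beta$ for $\beta\to\alpha$ truly converge into a plaque of $W_\alpha$ rather than escape the transverse disks; this is where $\dim M\ge 2$ enters, through nontrivial transversals, together with the closedness of $L$ in $M$.
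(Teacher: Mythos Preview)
Your plan differs from the paper's, and the limit-stage difficulty you flag is a genuine gap, not easily repaired within your framework. Propagating a plaque of $L'$ from $W_\alpha$ into $W_{\alpha^+}$ requires the transversal slice where $L'$ exits $W_\alpha$ to land inside $W_{\alpha^+}$; arranging this coherently across a limit ordinal of $C$ would demand that the transversal widths of infinitely many already-constructed bands be compatible with a band chosen only afterwards, and nothing in your construction secures this without the transversals collapsing onto $L$ itself. Taking each $W_\alpha$ to be a finite union of charts rather than a single tube makes matters worse: even within one band a plaque of $L'$ might escape sideways before reaching the overlap with the next chart, so the claim that it ``continues into $W_{\alpha_0^+}$'' already needs the tube lemma and careful transversal bookkeeping.

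The paper's argument avoids transfinite propagation entirely by anchoring all tubes at a single point. Fixing $e\colon\LL_+\to L$ with $e(\alpha)\notin\overline{U_\alpha}$ (your club argument does exactly this), \cite[Lemma~2.3]{BGG} gives for each $\alpha$ a \emph{single} foliated chart $V_\alpha$ containing the whole compact arc $e([0,\alpha])$. All the $V_\alpha$ contain $e(0)$, and since $M$ is first countable, some neighbourhood $N$ of $e(0)$ lies inside uncountably many of them. Any leaf through $N$ then carries a plaque running the full length of each such $V_\alpha$ and so meets $M\setminus\overline{U_\alpha}$ for uncountably many $\alpha$; it cannot be Lindel\"of, hence is long. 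The saturation of $N$ is the required neighbourhood of $L$, and no induction of any kind is needed. In effect, the trick is to let the tubes telescope from a common basepoint rather than march outward in bands; first countability then replaces your transfinite plaque-continuation in one stroke.
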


The proof can be found at the end of Section \ref{subsectionkiekie}.

\medskip
\noindent$\bullet$ {\sc Minimal foliations.} Adapting the terminology of G.\,D. Birkhoff, a foliation is \emph{minimal} if every leaf is dense.  In the non-metrisable case,  a simple example in codimension-one just comes from the Kneser foliation with a single leaf, see \cite[Section~2]{BGG1}. Here we find two examples using Nyikos's technique. Puncturing an irrationally foliated $2$-torus, one can then `stretch' the end of a leaf converging to the puncture to make it long (see Section~\ref{Mathieu's-trick}). This construction yields:

\begin{theorem}\label{miminallong}
  There is a non-metrisable surface with a minimal foliation containing exactly one long leaf (which is not embedded), the other leaves being real lines.
\end{theorem}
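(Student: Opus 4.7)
The plan is to adapt the classical irrational foliation of the $2$-torus to a minimal foliation with exactly one long leaf, via a local surgery at a puncture. Begin with $\mathbb{T}^2 = \R^2/\Z^2$ equipped with the linear foliation $\mathcal{F}_\theta$ of irrational slope $\theta$, which is minimal and has every leaf a dense copy of $\R$. Fix a point $p$ on some leaf $\ell_0$ and form the punctured torus $T^\circ = \mathbb{T}^2\setminus\{p\}$ with the restricted foliation; this is still minimal, since $\ell_0$ is split into two disjoint half-leaves $\ell_\pm \cong \R$ each of which remains dense in $T^\circ$ (forward and backward orbits of an irrational flow are individually dense), and every other leaf is unchanged. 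Now apply the Mathieu trick of Section~\ref{Mathieu's-trick}: perform a local surgery at the puncture that lengthens the end of $\ell_+$ approaching $p$ into a long ray. This produces a non-metrisable $2$-manifold $S$ containing $T^\circ$ as an open dense subset and carrying an extension $\mathcal{F}$ of $\mathcal{F}_\theta|_{T^\circ}$ in which $\ell_+$ is enlarged into a single long leaf $\widetilde\ell \cong \LL_+$ while every other leaf of $\mathcal{F}_\theta|_{T^\circ}$ (including $\ell_-$) persists as a real-line leaf of $\mathcal{F}$.

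The three properties claimed by the theorem then follow almost automatically. Minimality: since $T^\circ$ is open and dense in $S$, every dense subset of $T^\circ$ is dense in $S$; as every leaf of $\mathcal{F}_\theta|_{T^\circ}$ is dense in $T^\circ$, every leaf of $\mathcal{F}$ is dense in $S$. Uniqueness of the long leaf: by construction only $\ell_+$ acquires $\omega_1$-length, yielding the single long leaf $\widetilde\ell$, and all other leaves are copies of $\R$. Non-embeddedness of $\widetilde\ell$: density of $\widetilde\ell$ in $S$ forces any short transverse arc to $\widetilde\ell$ based in $T^\circ$ to be met by infinitely many plaques of $\widetilde\ell$, exactly as any irrational leaf on $\mathbb{T}^2$ accumulates on itself; hence the subspace topology on $\widetilde\ell$ inherited from $S$ is strictly coarser than its intrinsic $\LL_+$-topology, so the canonical injective immersion $\widetilde\ell \hookrightarrow S$ is not a topological embedding.

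The main obstacle is thus the local surgery itself: producing a Hausdorff non-metrisable $2$-manifold $S$ in which $T^\circ$ sits as an open dense subset, together with an extension of $\mathcal{F}_\theta|_{T^\circ}$ in which precisely one leaf acquires $\omega_1$-length while no other leaves are disturbed. This is the technical content of Mathieu's trick detailed in Section~\ref{Mathieu's-trick}; granting it, the verification of minimality, uniqueness and non-embeddedness above is essentially formal.
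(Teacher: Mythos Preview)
Your overall strategy is identical to the paper's: irrationally foliate $\mathbb{T}^2$, puncture at a point $p$, perform a local surgery that lengthens one of the two resulting half-leaves into a long ray, and then read off minimality and non-embeddedness from density in the punctured torus. Your verifications of these last two properties are correct and in fact more explicit than what the paper writes.

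There is, however, a circularity in your citations. Section~\ref{Mathieu's-trick} \emph{is} the proof of Theorem~\ref{miminallong}; it does not contain a separately established ``Mathieu trick'' that you may invoke as a black box. The technical ingredient you actually need is the \emph{Nyikosization} of Section~\ref{subsectionkiekie} (Proposition~\ref{thmkiekie} and the paragraph following Remark~\ref{remafterNyikos}): choose a foliated chart $\varphi:\R^2\to U\subset\mathbb{T}^2$ about $p$ sending vertical lines to leaves, and transport the Nyikos construction through $\varphi$. That is precisely what produces the Hausdorff non-metrisable surface $S\supset T^\circ$ with the desired foliated extension in which one half-leaf is replaced by a copy of $\LL_+$. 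Once you substitute ``Nyikosization of $\mathbb{T}^2$ in $U$ through $\varphi$'' for your appeals to ``Mathieu's trick,'' your argument is complete and coincides with the paper's.
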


It is worth noting that if a leaf is sequentially compact, then it must be embedded (this follows for instance from \cite[Lemma 2.8]{GGDynamics}). This example shows that a long ray leaf is not always embedded. We also give a version with short leaves in Section \ref{minimal-short}:

\begin{theorem}\label{thmmoorised-torus} 
  There is a minimal foliation on a non-metrisable surface such that all leaves are real lines.
\end{theorem}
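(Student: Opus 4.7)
The plan is to ``moorise'' the Kronecker foliation of the $2$-torus, i.e.\ to combine the Pr\"ufer bordification used for Theorem~\ref{miminallong} with R.\,L. Moore's folding trick so as to produce a non-metrisable surface all of whose leaves are real lines, while preserving minimality.

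Concretely, I would equip $\mathbb T^2=\R^2/\Z^2$ with the linear foliation $\mathcal K$ of irrational slope $\alpha$; every leaf is a dense real line and so $\mathcal K$ is minimal. Fix a short arc $T\subset\mathbb T^2$ transverse to $\mathcal K$ and a flow-box neighbourhood $U\cong T\times(-\varepsilon,\varepsilon)$ in which $\mathcal K$ is horizontal. Cut the torus along $T$ to obtain a surface with boundary whose boundary consists of two copies $T_\pm$, and apply the Pr\"ufer blow-up to each $T_\pm$: replace every point $t\in T_\pm$ by a boundary line $B_t^\pm\cong\R$ oriented along $\mathcal K$. Then R.\,L.\,Moore-fold by identifying $B_t^-$ with $B_t^+$ for each $t\in T$ via an orientation-preserving involution compatible with the foliated structure on $U$. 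The resulting boundaryless surface $S$ is non-metrisable, since an uncountable family of new folded leaves has been inserted along $T$, and it carries a natural foliation $\mathcal F$ which agrees with $\mathcal K$ away from the surgery region.

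Next, a leaf-by-leaf inspection shows that every leaf of $\mathcal F$ is a real line. Leaves of $\mathcal K$ disjoint from $T$ are unaffected. The folded boundary lines are copies of $\R$ by construction of the Moore involution. Finally, a leaf of $\mathcal K$ that crossed $T$ transversally at a single point $t$ is cut by the surgery into two open rays, which the Pr\"ufer--Moore bordification reglues through the folded leaf at $t$ into a single copy of $\R$. (This is the point at which, in contrast to Theorem~\ref{miminallong}, no long ray is produced: folding pairs of rays instead of stretching a single one prevents transfinite concatenation.)

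The main difficulty, and thus where the principal work lies, is verifying minimality of $\mathcal F$. Let $L$ be any leaf of $\mathcal F$ and $W\subset S$ a non-empty open set. If $W$ meets the complement of the surgery region, then $L\cap W\ne\emptyset$ follows at once from the minimality of $\mathcal K$ on $\mathbb T^2$, since both the topology and the foliation there are unchanged. In the harder case where $W$ lies entirely inside the surgery region, I would use the explicit basis of neighbourhoods inherent to the Pr\"ufer--Moore topology: every basic open set containing a point of a folded leaf also contains a ``fence'' of nearby horizontal slices of $U$, each of which extends back into the unchanged Kronecker region. Density of $L$ on any such slice then forces $L\cap W\ne\emptyset$. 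This reduction, which is a standard feature of Pr\"ufer--Moore-type bordifications, is the technical heart of the argument and completes the proof.
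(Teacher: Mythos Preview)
Your argument has a genuine gap at the leaf-by-leaf inspection. In a Kronecker foliation every leaf is dense, so \emph{no} leaf of $\mathcal K$ is disjoint from your transversal $T$, and every leaf meets $T$ not once but infinitely often. Hence after your surgery each original leaf is chopped into infinitely many arcs, each running between two consecutive crossings of $T$; such an arc (together with whatever spikes you attach) is a bounded piece of the old leaf and is certainly not dense in $S$. Minimality therefore fails, and your statement that a leaf ``crossed $T$ transversally at a single point $t$'' is simply incorrect.

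This is exactly the difficulty the paper has to work around. Rather than Moorizing at every point of a transverse arc, the paper Moorizes only at the points of a carefully chosen Cantor set $K\subset[0,1]$ (the ``$1/10$ Cantor set'') and chooses the irrational slope $a$ by a Baire-category argument so that $ka+K$ misses $K$ for every nonzero integer $k$. This number-theoretic condition guarantees that each Kronecker leaf meets $K\times\{0\}$ in \emph{at most one} point, so the Moorization cuts each affected leaf into exactly two pieces, each of which is still a dense real line. The paper even remarks, right after the proof, that one cannot Moorize at a full closed transversal: the translates of $K$ under the return map must be pairwise disjoint, forcing $K$ to be closed and nowhere dense, and then Baire's theorem prevents the translates from covering the whole circle. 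The construction of $K$ and the matching slope $a$ is the missing idea in your proposal.
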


\medskip
\noindent$\bullet$ {\sc Counting foliations on $\omega$-bounded surfaces.}
As noticed in \cite[Corollary~6.5]{BGG}, the long plane $\LL^2$ has the striking property of having only two foliations up to homeomorphism. It is a fun `origami' game to seek a surface having exactly {\it one} foliation (up to homeomorphism). We propose the following solution using a sort of `paper cone' surface (which is $\omega$-bounded):

\begin{prop} \label{one-foliation}
The double $2\overline{Q}$ of the closed quadrant $\overline{Q}=\{(x,y)\in {\mathbb L}^2 : -y \le x\le y\}$ admits a unique foliation up to homeomorphism (up to isotopy, as well).
\end{prop}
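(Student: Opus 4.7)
My plan has three steps: exhibit a foliation, prove uniqueness up to homeomorphism, then upgrade to isotopy.

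\textbf{Existence.} I would exhibit an explicit foliation $\mathcal{F}_0$ of $2\overline{Q}$ by doubling the vertical-line foliation of $\overline{Q}$. Each ``leaf'' $\{x=c\}\cap\overline{Q}$ is a closed half-ray emanating from its unique boundary point of $\overline{Q}$: the apex when $c=0$, and the point $(c,|c|)$ when $c\ne 0$. When doubled, the two half-rays (one from each copy of $\overline{Q}$) are joined at this common boundary point and together form a long line $\LL$ topologically embedded in $2\overline{Q}$. A direct check of the foliated chart condition at the apex (a regular point of $2\overline{Q}$, being the double of a corner of interior angle $\pi/2$) and along the rest of $\partial\overline{Q}$ (everywhere interior in the double) gives a non-singular foliation $\mathcal{F}_0$ all of whose leaves are long lines.

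\textbf{Uniqueness.} View $2\overline{Q}$ as an $\omega$-bounded longpipe: the filtration by doubles $\overline{D}_\alpha=2(\overline{Q}\cap\{y\le\alpha\})$ of truncated triangles displays each $\overline{D}_\alpha$ as a compact 2-disk whose boundary is a single circle, and $2\overline{Q}$ decomposes as a compact ``cap'' containing the apex glued along a circle to a pipe $\mathbb{S}^1\times\LL_+$ going to the $\omega_1$-end. Given any foliation $\mathcal{F}$ on $2\overline{Q}$, the rigidity dichotomy \cite[Theorem~5.2 and Proposition~5.3]{BGG} applied to the pipe forces $\mathcal{F}$ to be ultimately vertical or asymptotically horizontal. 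The crucial step is ruling out asymptotic horizontality: if it held, some $\mathbb{S}^1\times\{\alpha_0\}$ would be a circle leaf (a circle slice that is a union of leaves must be a single circle leaf, since open sub-arcs of a circle cannot partition it), and the restriction of $\mathcal{F}$ to the compact disk $\overline{D}_{\alpha_0}$ would be a non-singular line field tangent to $\partial\overline{D}_{\alpha_0}$ everywhere; but the Poincar\'e--Hopf theorem for line fields on a compact 2-disk forces such a tangent-boundary field to have interior singularities of total index $\chi(D^2)=1$, contradicting non-singularity of $\mathcal{F}$. Hence $\mathcal{F}$ is ultimately vertical, its far-out leaves in the pipe are long rays, and these must pair up across the cap through arcs to form long-line leaves. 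The induced cap foliation is a classical chord foliation with exactly two boundary tangency points (whose Poincar\'e--Hopf indices $\tfrac12+\tfrac12$ correctly account for $\chi=1$), and any two such chord foliations on a compact disk are equivalent by a self-homeomorphism; extending this equivalence level-by-level along the longpipe filtration produces a self-homeomorphism of $2\overline{Q}$ carrying $\mathcal{F}$ onto $\mathcal{F}_0$.

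\textbf{Isotopy.} The level-by-level extension above can be chosen through chord-foliation-preserving homeomorphisms that are each isotopic to the identity on the relevant metrisable sub-disk, yielding the global isotopy. The main obstacle is the Poincar\'e--Hopf step ruling out asymptotic horizontality; aside from that, the argument is a routine combination of the rigidity dichotomy from \cite{BGG} with the classical classification of foliations of compact 2-disks transverse to the boundary.
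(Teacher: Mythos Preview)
Your uniqueness argument rests on the claim that $2\overline{Q}$ minus a compact cap is a cylinder $\mathbb{S}^1\times\LL_+$, but this is false: the conical pipe $2\overline{Q}\setminus\{0\}$ is a genuinely different longpipe, and your own model foliation $\mathcal{F}_0$ refutes the claim. If the pipe were $\mathbb{S}^1\times\LL_+$, then since $\mathcal{F}_0$ has long leaves, \cite[Theorem~5.2]{BGG} would force it to be ultimately vertical, so that on some $\mathbb{S}^1\times(\alpha,\omega_1)$ the restricted foliation is the product foliation by long rays $\{p\}\times(\alpha,\omega_1)$. Pulling back, every restricted leaf of $\mathcal{F}_0$ in the corresponding neighbourhood of infinity would be a long ray. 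But for any level $\gamma$ and any $c$ with $|c|>\gamma$, the $\mathcal{F}_0$-leaf through $(c,|c|)$ is the double of $\{c\}\times[|c|,\omega_1)$, a long \emph{line} lying entirely in $\{y>\gamma\}$; so no such product structure exists. (There is a second symptom of the same error in your text: were the pipe truly a cylinder and the foliation ultimately vertical, the cap boundary would be \emph{everywhere} transverse, not tangent at two points as you write---and a foliation of the disk everywhere transverse to the boundary is just as impossible as one everywhere tangent, so both branches of your dichotomy would be empty.)

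The paper sidesteps this by applying the dichotomy not to the whole pipe but to each of the two open quadrants $Q$ embedded in $2\overline{Q}$, via \cite[Proposition~6.1]{BGG}, which is tailored to $Q$ rather than to products. This yields three combinations (H--H, H--V, V--V) of asymptotic behaviours on the front and back faces, and the mixed case H--V needs a separate argument that your framework does not even see: one isolates a suitable compact region straddling the seam, flattens it, and plumbs it with a reflected copy to manufacture an impossible foliated disk. Only after H--H and H--V are excluded does one know the foliation is vertical on both faces; uniqueness is then finished by invoking \cite[Lemma~6.4]{BGG} on a system of compact squares rather than by a single chord-foliation classification on a cap. Your Poincar\'e--Hopf disk argument correctly handles H--H and your existence step is fine, but the structural heart of the uniqueness proof is missing.
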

 
We prove this in Section \ref{sec-one-foliation}. All of the examples of $\omega$-bounded simply-connected surfaces inspected in \cite{BGG} (and this last one) have a finite number of foliations up to homeomorphism (this number is often zero). Here we show that, unsurprisingly, there may be infinitely many:

\begin{prop}\label{infinitely-many-foliations}
There is a simply connected, $\omega$-bounded surface possessing infinitely many foliations (up to homeomorphism).
\end{prop}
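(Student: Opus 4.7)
The plan is to exhibit a specific simply connected, $\omega$-bounded surface $P$ and show that it admits a sequence of foliations $\mathcal{F}_0,\mathcal{F}_1,\mathcal{F}_2,\ldots$ distinguished by the number of closed (circle) leaves. The surface $P$ I would take to be a bagpipe whose metrisable ``bag'' $B$ is a $2$-disk (or a disk-with-decorations) and whose single long pipe is attached to $B$ along a circle in $\partial B$. The pipe should be of a type flexible enough to admit a natural extension of foliations from the bag without collapsing them: concretely, one can take the pipe to be a variant of the double $2(\LL_{\ge 0}\times [0,1])$ that meets $\partial B$ along a single circle. Simple connectedness follows from van Kampen (both bag and pipe are simply connected and glued along a circle), and $\omega$-boundedness is preserved by the bagpipe construction.

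For each $n\ge 0$ I would define a foliation $\mathcal{F}_n$ on $P$ by placing a foliation with exactly $n$ closed circle leaves on the bag $B$ (e.g.\ $n$ nested Reeb components, a classical metrisable construction) and extending horizontally across the pipe in the manner of the standard pipe foliation; the boundary data between bag and pipe is matched by a rotation along the attaching circle. The number $N(\mathcal{F})$ of closed leaves homeomorphic to $\mathbb{S}^1$ is a topological invariant of the foliation, since homeomorphisms of foliated surfaces biject compact leaves. By construction $N(\mathcal{F}_n)=n$, so the foliations $\mathcal{F}_n$ are pairwise non-homeomorphic as $n$ varies over the nonnegative integers.

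The main obstacle is choosing the pipe carefully so that the bag-foliations extend without being identified. This is a real issue: in $\LL^2$ (itself a long pipe with metrisable bag a disk), the rigidity results of \cite{BGG} force foliations to be ultimately vertical or asymptotically horizontal, which in turn trivialises bag complications and yields only two foliations up to homeomorphism. By taking the pipe to be a doubled long half-strip with a horizontal foliation compatible with any bag foliation (so the pipe extension never forces the bag foliation to be trivial), this collapse is avoided. A secondary technical point is verifying that the number of circle leaves of the bag foliation indeed equals $n$ globally — since the pipe foliation contributes only leaves homeomorphic to $\LL$ or $\R$, no extra circles appear, so the count is preserved.
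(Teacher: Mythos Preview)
Your approach has a fatal gap: a simply connected surface cannot carry a foliation with any circle leaf, so your proposed invariant $N(\mathcal{F})$ is identically zero and the foliations $\mathcal{F}_n$ you describe do not exist. The reason is Schoenflies: by \cite[Theorem~1]{GaGa} any embedded circle in a simply connected surface bounds a disc, and restricting the foliation to that disc would give a foliation of the $2$-disc with the boundary a leaf, which is impossible (double it to get a foliated $2$-sphere, or invoke Poincar\'e--Hopf). The paper states this explicitly right after Proposition~\ref{infinitely-many-foliations}: on a simply connected $\omega$-bounded surface every leaf is a long line, so circles, real lines and long rays are all excluded. Your ``$n$ nested Reeb components on a disc'' already fails at the metrisable level for the same reason---the innermost circle would have to bound a foliated disc.

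The paper's construction is therefore of a completely different nature. It builds the surface by gluing together $\omega+1$ copies of the closed first octant $\OO\subset\LL^2$ in a cyclic pattern, so that the result is a simply connected long pipe. The infinitely many foliations $\mathcal{F}_i$ are obtained by choosing, octant by octant, between the horizontal and vertical trivial foliations; all leaves are long lines. The invariant distinguishing the $\mathcal{F}_i$ is not a leaf count but the combinatorics of which ``$2$-sided diagonals'' $\Delta_k$ are transverse versus tangent to the foliation: one shows that any self-homeomorphism of the surface must fix each $\Delta_k$ up to a club set, so this pattern is rigid. In short, the non-triviality lives in the long part of the surface, not in a metrisable bag.
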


This is proved in Section \ref{pipus-zero}. Note that in this situation all the leaves must be long lines. (Indeed the circle is excluded by Schoenflies, and the spiraling of short ends imposed by Poincar\'e-Bendixson's theory precludes real and long-ray leaves, leaving us with the long line as the only possible leaf type, see \cite{BGG, GGDynamics} for more details.)


\medskip
In the metrisable case, flows and foliations are closely related (loosely speaking, the existence of one implies the existence of the other). An analysis of which part of the `metrisable results'  survives in the non-metrisable case is done in \cite{GGDynamics}, so we will not delve into this subject except for some side remarks.

Since some of our surfaces are built by using Pr\"ufer's, Moore's and Nyikos' constructions, we recall them in Section \ref{Nyikos}, and explain how they can be done in a foliated setting.



\section{Three basic constructions: the Pr\"ufer, Moore and Nyikos surfaces}\label{Nyikos}

In this section, we recall three constructions of non-metrisable surfaces. The first two (Pr\"ufer and its Moore variant) are very classical,  but we recall them briefly since we are going to use them in foliated settings. The last one (Nyikos), though published in \cite{Nyikos90}, is apparently much less known,  so we reserve more space for its presentation, and include the proof that the resulting space is indeed a surface.

\subsection{Pr\"ufer and Moore}\label{PruferMoore}

\smallskip
The Pr\"ufer surface (with boundary) is obtained by taking  $\R\times\R_+\sqcup \cup_{x\in\R}\R_x$, where each $\R_x$ is a copy of $\R$. The topology on $\R\times\R_+$ is the usual topology. A  neighbourhood of a point $y\in\R_x$ is given by the union of an open interval $(a,b)\subset\R_x$ containing $y$ with an open sector in $\R\times\R_+$ emanating from $(x,0)$ (though this point is not in $\R\times\R_+$) whose sides have slopes $\frac{1}{a}, \frac{1}{b}$, and whose height is $\varepsilon > 0$ (see for instance \cite[Section~2]{BGG1} for more details).  With this topology, a line of slope $1/y$ that would intersect the horizontal axis at $x$ tends to the point $y\in\R_x$. The space obtained is easily seen to be a surface whose boundary is the union of all $\R_x$ (it thus has uncountably many boundary components).

Alternatively, one can take the upper half plane $\R\times\R_+$ and add at each `point' $(x,0)$ the system of rays emanating from it as a boundary component.  

There are three ways to obtain a borderless surface from the Pr\"ufer surface: add collars to each boundary component, or take the double. The third is the Moore surface which is obtained by taking the quotient of the Pr\"ufer surface by the relation $y\sim z$ for $y,z\in\R_x$ and $y=-z$, yielding a boundaryless surface. (Notice that if $y$ and $z$ belong to different $\R_x$, they will not be identified.) After the identification, each set $\R_x$ is folded around $0$ and looks more like a spike inserted `vertically' inside the upper half plane. Figure \ref{figPM} shows that if we foliate the upper half plane by vertical lines, then after `Moorizing', the leaves extend nicely through the $0$-point of each spike;
on the double of the Pr\"ufer surface, the horizontal lines together with each of the sets $\R_x$ gives a foliation. 

\begin{figure}[h]
\centering
    \epsfig{figure=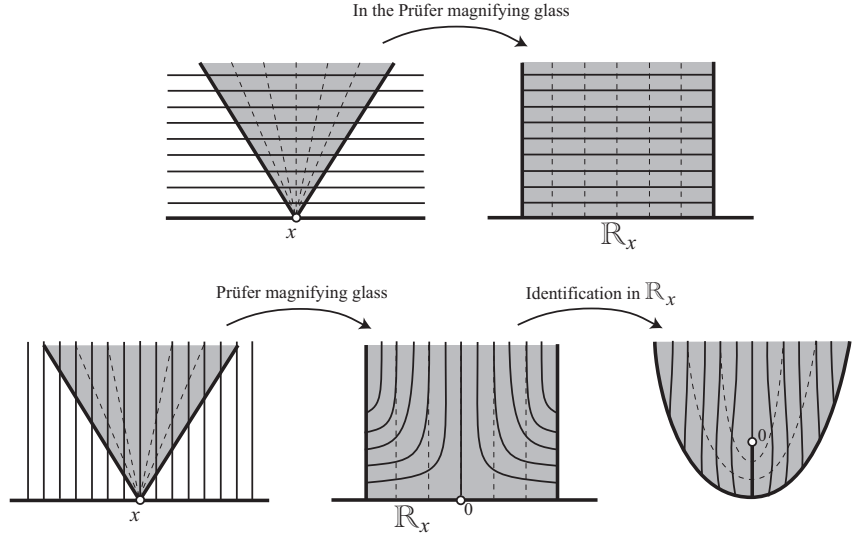, width=112mm}
    \caption{\label{figPM} Foliations on the Pr\"ufer and Moore surfaces.}
\end{figure}

We can also perform the identification in some but not all of the boundary components. Whenever we say that the boundary components in some $A\subset\R$ are {\em Moorized}, it will mean that the identification is done only in the sets $\R_x$ for $x\in A$.

Suppose we are given a surface $S$ with a chart $\varphi:\R^2\to U\subset S$, and let $S_0$ be the subsurface (with boundary) $\varphi((0,1)\times[0,1))$.\footnote{We take $(0,1)\times[0,1)$ to avoid bad behaviour near $\partial S_0$, such as a diverging sequence $x_n$ in $\R^2$ with $\varphi(x_n)$ having an accumulation point on $\partial S_0$.} We say that the {\em Pr\"uferisation of $S$ in $S_0$ through $\varphi$} is the surface obtained by first deleting the boundary of $S_0$ and replacing each of its points $x$ by copies $\R_x$ of $\R$, defining the topology as in the Pr\"ufer surface, and carrying it through $\varphi$. One can similarly define the {\em Moorization of $S$ in $S_0$}, or Moorize some of the components and not the others. For instance, if one has a foliation of the upper half plane by lines of slope $s$, we can first make them vertical by a homeomorphism, then Moorize everywhere, and carry this construction through the inverse homeomorphism. The spikes are thus inserted with slope $s$, so to say, and the foliation extends to the new space with each spike being part of a leaf. We can also Pr\"uferize or Moorize only at the points belonging to some {\em closed} subset of $\partial S_0$, and we would still obtain a surface.



\subsection{The Nyikos surface}\label{subsectionkiekie}

We shall show the following, which implies Theorem \ref{thmonelongleaf}
(compare with \cite[Examples 6.3 and 6.7]{Nyikos90}):

\begin{prop}\label{thmkiekie}
There is a surface $M=O\sqcup L$, where $O$ is homeomorphic to $\mathbb R^2\setminus\{(0,0)\}$ and $L$ to the closed long ray. 
 Moreover there is a foliation of $M$ in which all leaves except one are real, the exception being the open long ray.
\end{prop}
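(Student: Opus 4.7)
The plan is to construct $M$ by adapting the Prüfer technique of Section \ref{PruferMoore} to the transfinite setting: perform $\omega_1$-many nested local Prüferisations inside a punctured plane so that the concatenated boundary segments form a long ray, while the metrisable interior is not enlarged.

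\emph{Construction of $M$.} Start with $N_0 \cong \mathbb{R}^2 \setminus \{0\}$ together with a distinguished arc $J_0 \subset N_0$ emanating from the puncture; $J_0$ serves as the initial portion of the eventual long ray. Recursively build $N_{\alpha+1}$ from $N_\alpha$ by Prüferising a small disc around the far endpoint of the current arc $J_\alpha$, appending a new half-open segment $J_\alpha^+ \cong [0,1)$ to the distinguished arc. At limit stages take direct limits. Set $M = N_{\omega_1}$. The total accretion $L = \{0\} \cup \bigcup_{\alpha < \omega_1} J_\alpha^+$ then has the order type of $\omega_1 \times [0,1)$ and is homeomorphic to the closed long ray $\LL_{\ge 0}$, with endpoint the former puncture of $\mathbb{R}^2$; the complement $O = M \setminus L$ remains homeomorphic to $\mathbb{R}^2 \setminus \{0\}$ because each local Prüferisation trades a disc for a topologically equivalent patch and no genuinely new metrisable material is introduced.

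\emph{Foliation.} Foliate $N_0$ by horizontal lines in a chart making $J_0$ a horizontal arc; in particular $J_0$ sits inside a distinguished real-line leaf. At each step, extend the foliation across the new Prüfer patch as explained at the end of Section \ref{PruferMoore} so that the next segment $J_\alpha^+$ continues inside the same distinguished leaf. After $\omega_1$ steps that leaf sweeps across the entire open long ray $\LL_+ = L \setminus \{0\}$, producing the sole long leaf. The endpoint $0 \in L$ lies on a separate leaf, namely the horizontal line through the former puncture on the side opposite to $J_0$; this leaf is homeomorphic to $\mathbb{R}$. All remaining leaves are horizontal line segments inside the Prüfer patches, patched into real-line leaves across successive stages.

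The main obstacle is verifying that the transfinite direct limit is genuinely a Hausdorff locally Euclidean surface and that $O$ really stays $\mathbb{R}^2 \setminus \{0\}$ throughout. Hausdorffness and the chart property at limit ordinals $\alpha$ require careful bookkeeping of the nested Prüfer discs so that separation passes to the direct limit and the foliated charts of the $N_\beta$ ($\beta < \alpha$) assemble correctly into charts of $N_\alpha$; preservation of the homeomorphism type of the interior reduces to the observation that each Prüferisation replaces an open disc by a homeomorphic open patch, extending $L$ by one unit piece $[0,1)$ but leaving the ambient metrisable surface unchanged up to homeomorphism.
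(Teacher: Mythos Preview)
Your proposal has a genuine gap at the limit ordinals, and this is exactly where the substance of the construction lies. Writing ``at limit stages take direct limits'' is not enough: a direct limit of surfaces along open embeddings need not be a surface, because the new point $\lambda\in L$ added at a limit ordinal must acquire a Euclidean neighbourhood, and nothing in your recursion produces one. You acknowledge in your final paragraph that this ``requires careful bookkeeping'', but you supply none. The paper's proof is almost entirely devoted to this issue: it fixes a $<^*$-increasing $\omega_1$-sequence of functions $f_\alpha:\omega\to\omega$, uses them to define nested wedge-shaped regions $U(\varepsilon,x,y)\subset\mathbb R^2$ that serve as basic neighbourhoods of long-ray points, and then builds the charts $\varphi_\alpha:U(\infty,0,\alpha)\to\mathbb R\times(0,1)$ by an explicit transfinite induction whose limit step is the delicate part. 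Without some analogue of this $<^*$-mechanism your limit-stage surfaces are simply not shown to be manifolds, and the foliated charts are not shown to assemble.

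There is also a secondary imprecision at successor stages. Pr\"uferisation, as defined in Section~\ref{PruferMoore}, replaces a boundary point by an entire copy of $\mathbb R$, not by a half-open segment $[0,1)$; it is unclear what operation you mean by ``Pr\"uferising a small disc around the far endpoint'' of an interior arc, and hence unclear why the complement $O$ remains $\mathbb R^2\setminus\{0\}$ rather than acquiring extra Pr\"ufer boundary lines. The paper sidesteps this entirely by defining $M$ all at once as the set $(\mathbb R^2\setminus\{0\})\sqcup\mathbb L_{\ge 0}$ with an explicit topology, so that $O$ carries its usual Euclidean topology by fiat; the work is then to show the long-ray points have charts, not to track the homeomorphism type of $O$ through a recursion.
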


\begin{proof}
The version of the manifold that we present appears in \cite{Nyikos90} but the exposition is a bit sketchy, so we give a description of the manifold while describing its foliation.

It is well-known that there exists an $\omega_1$-sequence $\langle f_\alpha\rangle$ of increasing functions 
$f_\alpha:\omega\to\omega$ such that $f_\alpha<^*f_\beta$ (i.e. there is $n$ so that $f_\alpha(m)<f_\beta(m)$ for each $m\ge n$) whenever $\alpha<\beta$. 
Further, $f_0<f_\alpha$ for all $\alpha>0$ and we may assume that $f_0(n)=n$ for all $n$. 
The sequence $\langle f_\alpha\rangle$ may be constructed by induction on the limit ordinals, with $f_{\alpha+1}=f_\alpha+1$, for instance.
For each $x\in\LL_+$, write $x=\lambda+r$ for $\lambda$ a limit ordinal and $r\in[0,\infty)$ and declare 
$f_x:\omega\to[0,\infty)$ to be the function given by $f_x(s)=f_\lambda(s)+r$. If $r$ is an integer then this definition agrees with the previous one.

For each $x\in\mathbb L_{\ge0}$ define a symmetric function $g_x:\mathbb R\to(-\infty,0]$ by 
$g_x\left(\pm\frac{1}{n}\right)=-\frac{1}{f_x(n)}$, extend $g_x$ linearly on each interval 
$\pm\left[\frac{1}{n+1},\frac{1}{n}\right]$, set $g_x(0)=0$ and set $g_x(s)=g_x(1)$ when $|s|\ge 1$. 

Let $M=(\mathbb R^2\setminus\{(0,0)\})\sqcup\LL_{\ge0}$ and topologise $M$ as follows. 
For any $\varepsilon>0$ (including $\varepsilon=\infty$) and $x,y\in\LL_+$ with $x<y$ set

$$
  U(\varepsilon,x,y)=\{(s,t)\in\mathbb R^2: |s|<\varepsilon \mbox{ and }g_x(s)<t<g_y(s)\}\cup(x,y),
$$
$$
  V_\varepsilon=\{(s,t)\in\mathbb R^2 : |s|<\varepsilon \mbox{ and }-\varepsilon<t<g_\varepsilon(s)\}\cup[0,\varepsilon),
$$
where $(x,y)$ and $[0,\varepsilon)$ are both intervals in $\mathbb L_{\ge0}$.
Set
$$
  \mathcal B_1=\{ U\subset\mathbb R^2\setminus\{(0,0)\}: U \mbox{ is open in the usual topology}\},
$$
$$
  \mathcal B_2=\{U(\varepsilon,x,y) : \varepsilon>0,\ x,y\in\LL_+\mbox{ and } x<y\},
\qquad  \mathcal B_3=\{V_\varepsilon : \varepsilon>0\}.
$$
Then $\mathcal B_1\cup\mathcal B_2\cup\mathcal B_3$ is a basis for a topology on $M$. Indeed, it is clear that $U\cap V\in\mathcal B_1$ whenever $U\in\mathcal B_1$ and $V\in\mathcal B_1\cup\mathcal B_2\cup\mathcal B_3$ and that 
  $U\cap V\in\mathcal B_3$ whenever $U,V\in\mathcal B_3$. Now suppose that $\delta,\varepsilon>0$ and $v,w,x,y\in\mathbb L_+$ with $v<w$ and $x<y$. 
  Clearly $U(\delta,v,w)\cap U(\varepsilon,x,y)\cap\mathbb R_+^2\in\mathcal B_1$. Suppose $z\in(v,w)\cap(x,y)$. 
  Choose $\zeta\in\mathbb R$ so that $\zeta\le\min\{\delta,\varepsilon\}$ and
  $$
    g_{\min\{v,x\}}(s)\le g_{\max\{v,x\}}(s)<g_{\min\{w,y\}}(s)\le g_{\max\{w,y\}}(s)
  $$
  for each $s<\zeta$. Then $z\in U(\zeta,\max\{v,x\},\min\{w,y\})\subset U(\delta,v,w)\cap U(\varepsilon,x,y)$. 
  Hence $U(\delta,v,w)\cap U(\varepsilon,x,y)$ is open. Similarly $U(\delta,v,w)\cap V_\varepsilon$ is open.

\begin{rem}\label{remattach}
{\rm Although the closed long ray is included in $M$ as a subspace it is sometimes more convenient to identify the open long ray $\LL_+$ with the union of $L$ together with the negative $t$-axis of $\mathbb R^2\setminus\{(0,0)\}$.}
\end{rem}

The topological space $M$ is a surface; we note in passing that it is separable.  Moreover $\LL_+$ (in the sense of Remark \ref{remattach}) together with the lines $\{s\}\times\mathbb R$ for $s\not=0$ and $\{0\}\times(0,\infty)$ define a foliation of $M$. To verify this, we show that for each $\alpha\in\omega_1\setminus\{0\}$ there is a homeomorphism $\varphi_\alpha:{U(\infty,0,\alpha)}\to\mathbb R\times(0,1)$ which maps the interval $(0,\alpha)$ of $\LL_+$ to $\{0\}\times(0,1)$ and preserves the first coordinate of points of $\mathbb R^2$.

  Proof of the base case is very similar to proof of the successor case so we shall show only the latter of these two. 
  Suppose $\varphi_\alpha$ is given. Define $\varphi_{\alpha+1}$ as follows. 
  If $\xi\in{U(\infty,0,\alpha)}$ let $\varphi_{\alpha+1}(\xi)$ have the same first coordinate as $\varphi_\alpha(\xi)$ but half the second coordinate. 
  If $\xi\in{U(\infty,0,\alpha+1)}\setminus U(\infty,0,\alpha)$ then set $\varphi_{\alpha+1}(\xi)=(0,\frac{r+1}{2})$ if 
  $\xi=\alpha+r\in[\alpha,\alpha+1)\subset\LL_{\ge0}$; otherwise $\xi=(s,t)\in\mathbb R^2$ where $g_\alpha(s)\le t< g_\alpha(s)+1$, 
  in which case we set $\varphi_{\alpha+1}(\xi)=\left(s,\frac{1+t-g_\alpha(s)}{2}\right)$.

  Now suppose that $\lambda>0$ is a limit ordinal and that $\varphi_\alpha$ has been defined for all ordinals $\alpha\in(0,\lambda)$. 
  Choose an increasing sequence $\langle\alpha_n\rangle$ converging to $\lambda$ with $\alpha_0=0$. 
  Then $\varphi_{\alpha_n}$ has already been defined. Define sequences $\langle h_n:\mathbb R\to\mathbb R\rangle$ and $\langle N_n\rangle$ 
  inductively as follows (noting that $h_n$ and $N_n$ really depend on $\lambda$ and $\alpha_n$ as well). 
  The functions $h_n$ are symmetric so we need only define $h_n(s)$ for $s>0$. 
  Set $N_0=1$ and $h_0=g_0$. Given $N_{n-1}$ and $h_{n-1}$, choose $N_n$ so that $N_n-N_{n-1}>1$ and 
  $g_{\alpha_{n-1}}(s)<g_{\alpha_n}(s)<g_\lambda(s)$ whenever $0<s<\frac{1}{N_n}$, and let $h_n$ agree with 
  $g_{\alpha_n}$ on $\left(0,\frac{1}{N_n+1}\right]$, let $h_n(s)=\frac{h_{n-1}(s)+g_\lambda(s)}{2}$ when $s\in\left[\frac{1}{N_n},\infty\right)$ 
  and extend $h_n$ linearly in $\left[\frac{1}{N_n+1},\frac{1}{N_n}\right]$. 
  Inductively $h_n(s)=g_{\alpha_n}(s)$ when $|s|<\frac{1}{N_n}$, and for any $s\not=0$ the sequence $\langle h_n(s)\rangle$ is strictly 
  increasing and converges to $g_\lambda(s)$.

  Let $B_n=\varphi_{\alpha_n}\left(\{(s,t)\in\mathbb R\setminus\{(0,0)\}\times(-\infty,0): h_{n-1}(s)\le t<h_n(s)\}\cup[\alpha_{n-1},\alpha_n)\right)$ 
  (but with $[\alpha_0,\alpha_1)$ replaced by $(\alpha_0,\alpha_1)$ when $n=1$) and let 
  $\psi_n:B_n\to\mathbb R\times\left[1-\frac{1}{2^{n-1}},1-\frac{1}{2^n}\right)$ 
  ($\left[0,\frac{1}{2}\right)$ replaced by $\left(0,\frac{1}{2}\right)$ when $n=1$) be a homeomorphism which preserves the first coordinate and, 
  for each fixed value of the first coordinate, the second coordinate is increasing. 
  Then we may define $\varphi_\lambda|\varphi_{\alpha_n}^{-1}(B_n)$ to be $\psi_n\varphi_{\alpha_n}$.

Because the $s$-coordinate does not change at all, the charts $\varphi_\alpha$ are foliated charts, confirming our claim about the foliation.
\end{proof}
\medskip

\begin{rem}\label{remafterNyikos} 
{\rm  The construction above can be varied in a number of ways.  We could have performed the construction only on the half plane $s\ge0$ to obtain a surface with boundary $\mathbb L_+$, or only on the half plane $t<0$ so that the long leaf projects from the edge of the manifold. We can combine two surfaces as above to obtain a surface foliated by leaves homeomorphic to $\mathbb R$ except for one leaf which is homeomorphic to $\mathbb L$; alternatively we can append a copy of $\mathbb R\times\mathbb L_{\ge0}$ to obtain a surface foliated by leaves homeomorphic to $\mathbb L_+$ except for one leaf which is homeomorphic to $\mathbb L$. 
}
\end{rem}

As in the case of Pr\"ufer and Moore, the Nyikos construction (and its modifications just mentioned) can be carried out on a surface $S$ different from the plane, provided that we are given a homeomorphism $\varphi: \R^2\to U\subset S$ sending $0$ to $p$. We will then say that the resulting surface is the {\em Nyikosization of $S$ in $U$ through $\varphi$}.

\smallskip
We shall now prove that this construction cannot be adapted to obtain a Type I manifold, that is, we prove
Proposition \ref{long-stability}.

\begin{proof}[Proof of Proposition \ref{long-stability}]
  Let $M=\cup_{\alpha<\omega_1}U_\alpha$, where each $U_\alpha$ is open and Lindel\"of and $\overline{U_\alpha}\subset U_\beta$ when $\alpha<\beta$. 
  Suppose for simplicity that $L$ is a long leaf. Choose an embedding $e:\LL_+\to L$: for convenience we assume that 
  $e(\alpha)\notin\overline{U_\alpha}$. For each $\alpha<\omega_1$ by \cite[Lemma 2.3]{BGG} there is a foliated chart $(V_\alpha,\varphi_\alpha)$ with 
  $e([0,\alpha])\subset V_\alpha$. Because $M$ is first countable, there is a neighbourhood $N$ of $e(0)$ which lies in 
  uncountably many of the tubes $V_\alpha$ and such that all leaves passing through $N$ meet the neighbourhood 
  $M-\overline{U_\alpha}$ of $e(\alpha)$. Thus $N$ cannot intersect a short leaf. Indeed, if $F$ is a short leaf intersecting $N$ then $F$ is 
  Lindel\"of, so there is $\alpha\in\omega_1$ so that $F\subset U_\alpha$. 
  Thus all leaves through $N$ are long. Of course continuum many leaves pass through $N$.
\end{proof}

Note that we cannot ask that the long leaves should all be of the same type. For example, if we foliate the long plane $\mathbb L^2$ by horizontal long lines then remove a single point, the resulting foliation has two leaves which are long rays while the remainder are long lines. On the other hand if the leaf $L$ in the proof above is long in both directions, i.e., modelled on $\mathbb L$, then the argument may be applied in both directions of $L$ to exhibit a neighbourhood which intersects only leaves which are long in both directions.


\section{Dense leaves, minimal foliations on non-metrisable surfaces}\label{dense}

\subsection{A minimally foliated surface with a long ray leaf that is not embedded}\label{Mathieu's-trick}

In this sub-section we prove Theorem \ref{miminallong}. Our surface has fundamental group free on two generators. We then show that it is impossible to construct such a surface which is also simply connected.

Let ${\mathbb T}^2={\mathbb R}^2/{\mathbb Z}^2$ be the $2$-torus irrationally foliated by a `Kronecker' foliation, that is, its leaves are (quotients of) the lines of irrational slope $a$ in $\R^2$. Let $p\in\mathbb{T}^2$, and choose a small neighbourhood of $p$ given by a parallelogram with two horizontal sides and two sides of slope $a$. Then there is a homeomorphism $\varphi: \R^2\to U$ sending $0$ to $p$, the horizontals to horizontal lines and the vertical lines to the leaves.

Take $S$ to be the Nyikosization of ${\mathbb T}^2$ in $U$ through $\varphi$. This process can be seen as first making a puncture at $p$, whose leaf is disconnected in two parts, the negative and positive ones. Since $\varphi$ sends the vertical lines to leaves, the added copy of $\LL_+$ gets attached to the negative part of the leaf (recall Remark \ref{remattach}), and the foliation extends to $S$. Then both the `augmented' negative part and the positive one remain dense in the resulting surface, yielding our result.

\begin{prop} A simply connected surface does not support a foliation having a dense leaf.
\end{prop}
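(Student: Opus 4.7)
My plan is to argue by contradiction via a classical Reeb-arc obstruction. Suppose $S$ is a simply connected surface carrying a foliation $\mathcal{F}$ with a dense leaf $L$. A leaf homeomorphic to $\mathbb{S}^1$ would be closed in $S$, so density would force $L=S$, absurd in dimension two; hence $L$ is homeomorphic to one of $\R$, $\LL_+$, $\LL$ and in particular is totally ordered.

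Pick a foliated chart and a small transverse arc $T$ inside it. Density of $L$ makes $L\cap T$ dense --- hence infinite --- in $T$, while $L\cap T$ is discrete in the leaf topology of $L$ since each plaque of the chart meets $T$ at most once. A compact subarc of $L$ therefore meets $L\cap T$ only finitely often, so one can pick $p,q\in L\cap T$ consecutive along $L$: the leaf arc $\alpha\subset L$ between $p$ and $q$ then satisfies $\alpha\cap T=\{p,q\}$. Together with the subarc $\beta\subset T$ joining $q$ back to $p$, this produces a simple closed curve $\gamma=\alpha\cup\beta$ in $S$.

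Since $\pi_1(S)=0$, the loop $\gamma$ is null-homotopic. The image of any null-homotopy being compact, it sits inside a Lindel\"of --- hence metrisable --- open subsurface $S'\subset S$, and the classical Schoenflies theorem applied inside $S'$ yields an embedded closed disk $D\subset S'$ with $\partial D=\gamma$. The restricted foliation $\mathcal{F}|_D$ is nonsingular, tangent to $\alpha$, and transverse to $\beta$. A Poincar\'e--Hopf argument forbids any closed leaf inside $D$ (such a leaf would bound a subdisk supporting a nonsingular line field tangent to the boundary, forcing $\chi=0\ne 1$), so by a Poincar\'e--Bendixson compactness argument every leaf of $\mathcal{F}|_D$ is a compact arc with endpoints on $\partial D$; disjointness of distinct leaves prevents any leaf other than $\alpha$ from ending on $\alpha$. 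Consequently every leaf of $\mathcal{F}|_D$ has both endpoints on $\beta$, and sending $x\in\beta$ to the other endpoint of the leaf through $x$ defines a continuous involution $f\colon\beta\to\beta$ with $f(p)=q$, $f(q)=p$, and no fixed points (a fixed point would correspond to a closed leaf). But by the intermediate value theorem a continuous self-map of an interval swapping its endpoints must fix an interior point --- contradiction.

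The main obstacle is the Schoenflies step in the possibly non-metrisable surface $S$; the trick of retreating to a Lindel\"of open subsurface containing the null-homotopy's image reduces it to the classical planar case. Everything else --- the Poincar\'e--Hopf exclusion of closed leaves and the first-return involution on $\beta$ --- is routine planar foliation dynamics once the embedded disk $D$ is in hand.
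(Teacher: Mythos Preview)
Your argument is correct and takes a genuinely different route from the paper's. Both proofs begin the same way---locating two consecutive returns of the dense leaf $L$ to a transversal---but diverge at the key step. The paper does \emph{not} analyse $\mathcal F$ on the disk bounded by your Jordan curve $\gamma=\alpha\cup\beta$. Instead it modifies $\mathcal F$ inside the foliated chart by a homeomorphism $h$ that splices the two plaques together, producing a new foliation $\mathcal F'$ in which the leaf arc closes up into an honest circular leaf $L'$; the non-metric Schoenflies theorem of \cite{GaGa} then gives a disk with $L'$ as boundary leaf, and the contradiction is the one-line ``no foliated disk with leaf boundary.'' For this splicing to yield a foliation the two plaques must carry the \emph{same} orientation along $L$, which is why the paper spends a paragraph (the $y,z,w$ procedure) arranging this. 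Your approach never modifies $\mathcal F$, so it needs no orientation hypothesis at all.

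What each approach buys: the paper's splicing trick reduces everything to a single clean obstruction, at the cost of the orientation bookkeeping and a direct appeal to the non-metric Schoenflies result \cite{GaGa}. Your approach stays with the original foliation and avoids \cite{GaGa} by retreating to a Lindel\"of subsurface, but then must do real work on the disk: ruling out closed leaves, ruling out accumulation on $\alpha$ (this step is implicit in your ``Poincar\'e--Bendixson compactness'' sentence and relies on the product neighbourhood of the compact leaf arc $\alpha$, cf.\ \cite[Lemma 2.3]{BGG}), and running the first-return involution argument. One terminological quibble: what you invoke as ``the classical Schoenflies theorem applied inside $S'$'' is really the statement that a null-homotopic simple closed curve in a metrisable surface bounds an embedded disk, which needs a short pass to the universal cover on top of the planar Schoenflies; alternatively you could simply cite \cite[Theorem~1]{GaGa} directly in $S$ and skip the Lindel\"of reduction.
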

\begin{proof}
Suppose that $S$ is a simply connected surface and $\mathcal F$ is a foliation on $S$ with a leaf $L$ which is dense in $S$. Give $L$ an orientation. Choose a foliated chart $(U,\varphi)$ with $\varphi(U)=(0,3)\times(0,3)$ and set $V=\varphi^{-1}((0,3)\times(1,2))$. Next choose $y_1,y_2\in(1,2)$ such that:
\begin{itemize}
\item the plaques $\varphi^{-1}((0,3)\times\{y_1\})$ and  $\varphi^{-1}((0,3)\times\{y_2\})$ lie in $L$;
\item the natural orientations (i.e., as inherited from the direction of the $x$-axis of $\R^2$) of $\varphi^{-1}((0,3)\times\{y_1\})$ and  $\varphi^{-1}((0,3)\times\{y_2\})$ either both agree with the orientation of $L$ or are both opposite to it;
\item there is no other plaque in $U$ and lying on $L$ whose location on $L$ is between the plaques $\varphi^{-1}((0,3)\times\{y_1\})$ and $\varphi^{-1}((0,3)\times\{y_2\})$.
\end{itemize}
We describe a procedure for obtaining $y_1$ and $y_2$. The leaf $L$ meets $V$; choose $y\in(1,2)$ so that $\varphi^{-1}((0,3)\times\{y\})\subset L$. Let us assume that the natural orientation of $\varphi^{-1}((0,3)\times\{y\})$ agrees with that of $L$ and follow $L$ in its positive direction until it next meets $V$, say in $\varphi^{-1}((0,3)\times\{z\})$. If the natural orientation of $\varphi^{-1}((0,3)\times\{z\})$ also agrees with that of $L$ then set $\{y_1,y_2\}=\{y,z\}$. Otherwise continue to follow $L$ in its positive direction until it first meets $\varphi^{-1}((0,3)\times(y,z))$ (if $y<z$; otherwise use $(z,y)$), say in $\varphi^{-1}((0,3)\times\{w\})$. If the natural orientation of $\varphi^{-1}((0,3)\times\{w\})$ agrees with that of $L$ then let $\{y_1,y_2\}=\{w,y\}$ and if not let $\{y_1,y_2\}=\{w,z\}$.

\begin{figure}[h]
\centering
    \epsfig{figure=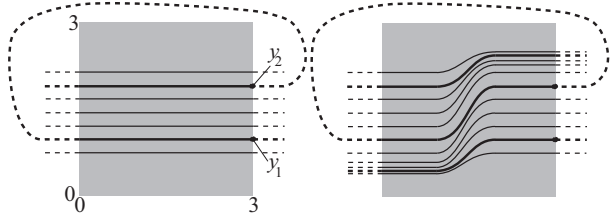,width=80mm }
    \caption{\label{Following a leaf} Following a leaf to create a circular leaf}
\end{figure}

Let $h:(0,3)\times(0,3)\to(0,3)\times(0,3)$ be a homeomorphism which is the identity except on $(1,3)\times(1,2)$ and maps sets of the form $(2,3)\times\{y\}$ to sets of the form $(2,3)\times\{\eta\}$, with $\eta=y_2$ when $y=y_1$.  See figure \ref{Following a leaf}.

Now let $\mathcal B$ be a basis for $\mathcal F$ in which the only chart meeting $\varphi^{-1}([1,2]\times[1,2])$ is the chart $(U,\varphi)$. Define a new foliation $\mathcal F'$ on $S$ which has basis $\mathcal B\setminus\{(U,\varphi)\}\cup\{(U,h^{-1}\varphi)\}$. The foliation $\mathcal F'$ has a leaf $L'$ which consists of the portion of $L$ between the two plaques plus the new plaque $\varphi^{-1}h((0,3)\times\{y_1\})$. Now the leaf $L'$ is a circle so by \cite[Theorem 1]{GaGa} $L'$ bounds a disc because $S$ is simply connected. Restricting $\mathcal F'$ to this disc gives a foliation of the disc in which the boundary is a leaf, which is impossible.
\end{proof}

Building on this argument, one can adapt the full Haefliger-Reeb theory to the non-metric realm. We refer the interested reader to \cite{Gabard11}.

\subsection{A minimally foliated non-metric surface with only short leaves}\label{minimal-short}

In this sub-section we prove Theorem \ref{thmmoorised-torus}.

\begin{proof}[Proof of Theorem \ref{thmmoorised-torus}]
We start again with a torus and an irrational foliation. Here, it is convenient to treat the torus as $[0,1]\times [-1/2,1/2]$, with the obvious identifications. Define $K$ to be the $1/10$ Cantor set, that is, the set of points in $[0,1]$ which may be written with decimals using only $0$ and $9$. Of course there are sometimes two ways of writing a decimal number, but when we have the choice we always choose to use the notation with infinitely many $9$s. (So for instance $0.1$ is in $K$, as it is equal to $0.0\bar{9}$.) Now, notice that if a real number $a$ may be written in decimals with one of its decimals (say the $n^{\rm th}$) in $\{3,4,5,6,7\}$, then we have:
\begin{equation}\label{axnotinK}
  a + x \textrm{ is not in $K$ for all $x$ in $K$ },
\end{equation}
because the $n^{\rm th}$ decimal of $a + x$ would then be in
$\{2,3,4,5,6,7,8\}$.

\begin{lemma}\label{aexists}
  There is an irrational number $a$ such that for all $k\in\Z\setminus\{0\}$, there is some $n(k)$ such that the $n(k)^\text{th}$ decimal of $ka$ is in $\{3,4,5,6,7\}$.
\end{lemma}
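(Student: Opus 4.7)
The plan is to produce $a$ by a soft existence argument rather than an explicit digit-by-digit construction, exploiting the fact that the set of ``bad'' reals for any fixed $k$ is negligible.

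Let $B \subset \R$ denote the set of real numbers all of whose decimal digits (in both the integer and fractional parts, using the convention from the paper that terminating expansions are replaced by trailing $9$s) lie in $\{0,1,2,8,9\}$. Intersected with any bounded interval, $B$ is covered at scale $10^{-n}$ by at most $O(5^n)$ intervals of length $10^{-n}$, whose total length tends to $0$; hence $B$ has Lebesgue measure zero. For each $k \in \Z \setminus \{0\}$, the set
\[
A_k \;:=\; \{\,a \in \R : ka \in B\,\}
\]
is the image of $B$ under the affine map $x \mapsto x/k$, and so also has measure zero. The union $A := \bigcup_{k \in \Z \setminus \{0\}} A_k$ is a countable union of null sets, and hence is itself null.

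Since $\Q$ is also null, the complement $\R \setminus (A \cup \Q)$ has full measure and is in particular nonempty. Pick any $a$ in this complement: then $a$ is irrational, and for every $k \in \Z \setminus \{0\}$ we have $ka \notin B$, meaning that some decimal digit of $ka$ fails to lie in $\{0,1,2,8,9\}$. That digit must therefore belong to $\{3,4,5,6,7\}$, and labelling its position $n(k)$ yields the required conclusion.

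There is no real obstacle here; the only thing to watch is the ambiguity of decimal representations (for instance $0.1 = 0.0\bar 9$), but this only affects the countable set of decimal-terminating rationals, which we discard when we require $a \notin \Q$. A Baire-category variant works equally well: $B$ is closed with empty interior, so each $A_k$ is closed nowhere dense, $A$ is meager, and one picks $a$ in the comeager set $\R \setminus (A \cup \Q)$ instead.
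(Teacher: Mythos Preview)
Your measure-theoretic argument is sound, and the paper's own proof is essentially the Baire-category variant you sketch in your last sentence: it sets $U_k$ to be the (open, dense) set of $x>0$ for which $kx$ has a $5$ somewhere after the decimal point, and extracts an irrational $a$ from the dense $G_\delta$ set $\bigcap_k U_k$. So the two proofs are parallel ``a generic $a$ works'' arguments, one via measure and one via category; neither is really more elementary, though your measure version sidesteps the need to verify openness.

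One small correction is needed. In the statement (and in its application to $K\subset[0,1]$ immediately afterwards) ``the $n(k)^{\text{th}}$ decimal'' means the $n(k)^{\text{th}}$ digit \emph{after the decimal point}. With your $B$ constraining integer-part digits as well, the conclusion $ka\notin B$ could in principle be witnessed solely by a digit to the left of the decimal point, which would not suffice. Define $B$ instead as the set of reals all of whose \emph{fractional} digits lie in $\{0,1,2,8,9\}$; this is still null (on each unit interval it is a Cantor-type set of measure zero, and $B$ is a countable union of integer translates of that), and the rest of your argument then goes through verbatim.
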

\proof
  For each $k\in \Z\setminus\{0\}$,
  let $U_k=\{x\in(0,\infty)\,:\, $ there is a $5$ somewhere after the decimal point in $kx$
  but the decimal expansion of $x$ does not end in either $5000\dots$ or $5999\dots\}$.
  The set $U_k$ is open and dense in $(0,\infty)$, so by the Baire Category Theorem, 
  the intersection of all the sets $U_k$ is a dense $G_\delta$. Hence $\cap_{k\in\Z\setminus\{0\}}U_k$ must contain at least one irrational number, call it $a$. 
  Then for each $k\in\Z\setminus\{0\}$ there is some $n(k)$ such that the $n(k)^\text{th}$ decimal of $ka$ is $5$.
\endproof

Let $a$ be a real number given by Lemma \ref{aexists}, and foliate the torus by the irrational lines of slope $1/a$. Then
for each leaf $F$ in the foliation, the intersection of $F$
and $K\times\{0\}$ is at most one point.

Choose $\varepsilon\in(0,\frac{1}{4a})$: then the leaf passing through the point $(1/2, 0)$ meets the horizontal line $(0,1)\times\{2\varepsilon\}$ before the vertical line $\{1\}\times(-1/2,1/2)$. Let $U$ be the open subset of the torus obtained from the strip $[0,1]\times(-2\varepsilon,2\varepsilon)$ from which the (part of the) leaf passing through $(1/2, 0)$ has been removed. Let $V\subset U$ be obtained from the strip $[0,1]\times(-\varepsilon,\varepsilon)$ with all the (parts of) leaves passing through the points in $[1/3,2/3]\times\{0\}$ removed. Notice that $V$ contains $K\times\{0\}$. Set $V^-=V\cap[0,1]\times[-1/2,0]$ and $V^+=V\cap[0,1]\times[0,1/2]$. Then there is a foliated chart $\varphi$ sending $U$ to $\R^2$ and $V$ to $(-1,1)^2$. Assume $V^-$ is sent to $(-1,0]\times(-1,1)$ and $V^+$ to $[0,1)\times(-1,1)$. Replace each point $x$ of $K\times\{0\}$ by two intervals $I_x$, $J_x$ by Pr\"uferizing in $V^-$ and $V^+$ through $\varphi$, and then Moorize $I_x$ and $J_x$. This yields a foliated surface (we use here that $K\times\{0\}$ is closed, of course). Since $K$ is uncountable, this surface is non-metrisable. Leaves that used to pass through $K\times\{0\}$ are now cut into $2$ leaves, and these remain dense in the resulting surface by (\ref{axnotinK}). We thus have a minimal surface. 
\end{proof}

One might think at first that we can Moorize all leaves by carefully choosing $K$, but this is impossible. Indeed,  for $n\in \mathbb{Z}$, call $K_n$ the $n$-th `translate' of $K$, consisting of the points $(x + na ($mod$1), 0)$, for $x\in K$ (these are the points obtained by following the foliation). Then, the $K_n$ are mutually disjoint. If $K$ has a non-empty interior, then it contains an interval of length $\varepsilon$, say, and so does each $K_n$ (we think of them as intervals in the circle). Then, the $K_n$ for $n = 0$ to $1/\varepsilon + 1$ cannot be mutually disjoint, because this would imply that some intersect $K$ more than once. Hence $K$ has an empty interior. If $K$ is closed (and so is each $K_n$), this implies that each $K_n$ is nowhere dense. It follows from Baire's Theorem that $[0,1]$ cannot be the union of all $K_n$ for $n \in \mathbb{Z}$.

We are obliged to modify only some of the leaves, which fall thus in two categories after the Moorization: the ones that are dense only at `one end', and the ones for which `both ends' are dense.



\section{Surfaces without foliations even after removing a compact subset}

\subsection{A contorted long pipe}\label{ContortedPipe}

Here we show that there is a long pipe that cannot be foliated even after removing any compact subset, that is, we prove Theorem~\ref{thmpipenonfol}. Our definition of \emph{long pipe} is the same as \cite[5.2]{Nyikos84}: the union of an $\omega_1$-sequence $\langle U_\alpha\ :\ \alpha<\omega_1\rangle$ of open subspaces $U_\alpha$ each homeomorphic to $\mathbb S^1\times\R$ such that $\overline{U_\alpha}\subset U_\beta$ and the boundary of $U_\alpha$ in $U_\beta$ is homeomorphic to $\mathbb S^1$ whenever $\alpha<\beta<\omega_1$.

\begin{proof}[Proof of Theorem~\ref{thmpipenonfol}]
In fact, we construct a long pipe $N=\cup_{\alpha\in\omega_1}N_\alpha$ such that for any $\alpha\in\omega_1$, $N\setminus\wb{N_\alpha}$ cannot be foliated.  Since any compact (indeed Lindel\"of) subset of $N$ is contained in some $N_\alpha$, the result follows. The idea is the following. We start with the first octant $\OO=\cup_{\alpha\in\omega_1}\OO_\alpha$, where
$\OO_\alpha=\{(x,y)\in\LL^2\,:\, 0\le y\le x<\alpha\}$, and we glue the boundaries together, that is we identify $(x,0)$ with $(x,x)$ for each $x\in\mathbb L_+$. Call this space $C=\cup_{\alpha\in\omega_1}C_\alpha$. By \cite[Proposition~6.1]{BGG}, for any $\alpha$, $C\setminus C_\alpha$ cannot be foliated by long leaves. After a puncture (say at $(0,0)$) $C$ can be foliated by short leaves, for instance the circles $\{(x,y)\,:\, x= \mbox{constant}\}$. To prevent this, we alter the boundaries $\overline{C_\beta}\setminus C_\beta$ (that are homeomorphic to the circle) at limit ordinals $\beta$ by inserting intervals so that they become homeomorphic to the circle with a spike, i.e. to the subset of $\mathbb R^2$ given by $\{(x,y)\,:\,x^2+y^2=1\mbox{ or } x\ge 0, y=0\}$. By \cite[Proposition~4.2]{BGG}, if one foliates this new space by short leaves, there would be a limit ordinal $\beta$ such that the altered boundary at height $\beta$ is saturated by the foliation, but this is impossible as this boundary is not even a manifold.

Denote by $\Lambda\subset\omega_1$ the limit ordinals. For each $\gamma\in\Lambda$, let $I_\gamma$ be a copy of $[3,4)$, and let $\langle\beta_{\gamma,n}\rangle$ be an increasing sequence of ordinals converging to $\gamma$. For each $\alpha\in\omega_1$, let $O_\alpha=\mathbb O_\alpha\setminus\{(0,0)\}$ and set $\widetilde{O}_\alpha=O_\alpha\cup(\bigcup_{\beta<\alpha; \beta\in\Lambda}I_\beta$. For each $\alpha\in\omega_1$, choose a homeomorphism $\varphi_\alpha:[2,\alpha)\to[2,4)$ such that if $\alpha\in\Lambda$ then $\varphi(\beta_{\alpha,n})=4-1/n$ when $n\ge 1$. Let $B$ be $(0,4)\times[0,2]$, and fix an embedding  $g:B\to B$ which is the identity outside $[2,4)\times(0,1]$ and whose image is $B \setminus [3,4)\times\{1/2\}$, such that $g$ leaves the second coordinate fixed, as in Figure \ref{noFoliation}. Next, for each $\alpha\in\omega_1\setminus\{0,1,2\}$ choose homeomorphisms $\psi_\alpha:O_\alpha\to B$ such that $\psi_\alpha|_{\overline{O_2}}$ does not change the $x$-coordinate, $\psi_\alpha|_{O_\alpha\setminus O_2}$ changes the first coordinate $x$ to $\varphi_\alpha(x)$, and $\psi_\alpha |_{[2,\alpha)\times[0,1]}$ is given by $(x,y)\mapsto (\varphi_\alpha(x),y)$.

\begin{figure}[h]
\centering
    \epsfig{figure=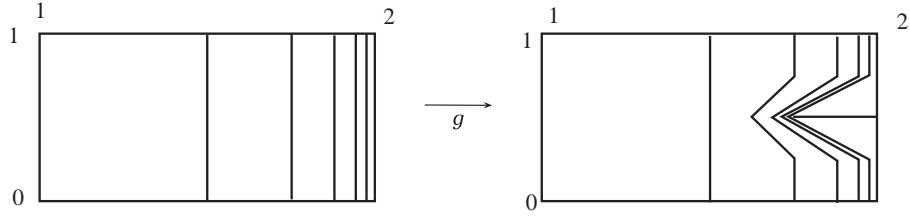, width=120mm}
    \caption{\label{noFoliation} Inserting intervals}
\end{figure}

We now define a topology on $\cup_{\alpha\in\omega_1}\widetilde{O}_\alpha$ by use of functions $\widetilde{\psi}_\alpha:\widetilde{O}_\alpha\to B$ ($\alpha>2$) which are defined by induction. If $\alpha < \omega$, we let
$\widetilde{\psi}_\alpha =\psi_\alpha$. If $\alpha\in\Lambda$, we let
$\widetilde{\psi}_\alpha|_{\widetilde{O}_{\beta_{\alpha,n}}\setminus\widetilde{O}_{\beta_{\alpha,n-1}}}$ 
be given by
$\psi_\alpha\circ(\psi_{\beta_{\alpha,n}})^{-1}\circ\widetilde{\psi}_{\beta_{\alpha,n}}$.
If $\alpha=\gamma+1$ and $\gamma\notin\Lambda$, we let
$\widetilde{\psi}_\alpha|_{\widetilde{O}_\gamma}$ be
$\psi_\alpha\circ(\psi_\gamma)^{-1}\circ\widetilde{\psi}_\gamma$,
and 
$\widetilde{\psi}_\alpha|_{\widetilde{O}_\alpha \setminus \widetilde{O}_\gamma}$ be $\psi_\alpha|_{\widetilde{O}_\alpha\setminus \widetilde{O}_\gamma}$. 
If $\gamma\in\Lambda$, we let
$\widetilde{\psi}_\alpha|_{\widetilde{O}_\alpha \setminus \widetilde{O}_\gamma}$ be $\psi_\alpha|_{\widetilde{O}_\alpha \setminus \widetilde{O}_\gamma}$,
$\widetilde{\psi}_\alpha(x)=\psi_\alpha\psi_\gamma^{-1}(x,1/2)$ for $x\in I_\gamma$, and
$\widetilde{\psi}_\alpha|_{\widetilde{O}_\gamma}$ be
$\psi_\alpha\circ(\psi_\gamma)^{-1}\circ g\circ\widetilde{\psi}_\gamma$. 
Choose the topology on $\widetilde{O}_\alpha$ that makes $\widetilde{\psi}_\alpha$ a
homeomorphism. Then
$\widetilde{O}=\cup_{\alpha\in\omega_1}\widetilde{O}_\alpha$
is a surface with boundary, which is a version of $\OO$ with inserted spikes. Notice that the boundary is not affected by the spiking process, so we can write these points using the same coordinates as in $\OO$. The required surface $N$ is obtained by identifying $(x,0)$ with $(x,x)$. The proof of \cite[Proposition~6.1]{BGG} adapts to the case of $N$, and therefore there cannot be any foliation of $N\setminus\wb{N_\alpha}$ by long leaves. Since we inserted the spikes, by \cite[Proposition~4.2]{BGG} neither can there be a foliation by short leaves, so $N\setminus\wb{N_\alpha}$ cannot be foliated, for any $\alpha$.
\end{proof}


\subsection{The mixed Pr\"ufer-Moore surface}\label{mixed_Pruefer_Moore}

Here, we prove Theorem~\ref{thmsepnonfol}, that is the existence of a separable non-metrisable squat surface lacking a foliation, even after removal of a compact subset.

Our example is a mixed Pr\"ufer-Moore surface. Let $P$ be the (separable) Pr\"ufer surface with boundary described in Section~\ref{PruferMoore}, i.e. the upper half plane whose horizontal axis has been Pr\"uferized. To avoid confusion, we shall denote the boundary components by $I_r$ ($r\in\R$) instead of $\R_r$ or $\R_x$, indexed according to their horizontal coordinate. Denote by $P^o$ the interior of $P$ (i.e. the upper half plane). For $A,B\subset\R$ disjoint, let $P_{A,B}=P^o\cup\bigcup_{r\in A\cup B}I_r$ be the surface with boundary where we Moorize the $I_r$ for $r\in B$. Notice that $P_{A,B}$ and its double $2P_{A,B}$ are squat (see \cite[Example 4.4]{BGG1}), separable, and moreover non-metrisable whenever one of $A,B$ is uncountable. We shall show that for certain $A,B$, the double $2P_{A,B}$ does not have any non-trivial foliation, even upon removal of a compact set.

\begin{theorem}\label{thm31bis}
Let $A,B\subset\R$ be disjoint, and $U\subset \R$ be open. Assume that $B$ is a $G_\delta$ dense subset of $U$ and that $A$ is such that $A\cap U_0$ is uncountable for each non-empty open $U_0\subset U$. Let $K\subset 2P_{A,B}$ be compact. Then $2P_{A,B}\setminus K$ does not possess any foliation.
\end{theorem}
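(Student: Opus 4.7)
The plan is to assume for contradiction that $\mathcal F$ is a foliation of $2P_{A,B}\setminus K$, and to derive a contradiction by playing the $G_\delta$-density of $B$ in $U$ against the uncountability of $A$ in every open subinterval of $U$. First, squatness of $2P_{A,B}$ forces every leaf to be short (a circle or a real line), since a long leaf would provide a non-eventually-constant continuous map from $\LL_+$, contradicting squatness. Since $K$ is compact, the set of $r\in\R$ for which $K$ meets either $I_r$ or a vertical ray $\{r\}\times(0,\infty)$ in either half of the double is bounded in $\R$; shrinking $U$ to an open subinterval $U'$ disjoint from this bounded set preserves all the hypotheses on $A\cap U'$ and $B\cap U'$, and over $U'$ the foliation $\mathcal F$ is defined on the full ``strip'' including all Pr\"ufer and Moore structures.

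The central observation is the stark topological dichotomy between Pr\"ufer and Moore points of $U'$. For $r\in A$, the line $I_r$ lies in the interior of $2P_{A,B}$, and the two vertical rays $\{r\}\times(0,\infty)$ from the opposite halves of the double both accumulate at the single point $0\in I_r$, joining through it into one embedded real line that crosses $I_r$ transversely; in particular, every small neighbourhood of $0\in I_r$ spans both halves of the double, so any foliated chart there has plaques linking them. For $r\in B$, by contrast, there are two disjoint Moore spikes (one per half of the double), each joining through its own root $0\in I_r^{(i)}$ with the ambient vertical ray into an embedded real line, but the two halves remain locally disjoint; a small neighbourhood of each root is entirely contained in one half of the double, so any foliated chart there has plaques confined to that single half.

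This motivates the following approach. For each $r\in U'$, define a ``one-sidedness'' predicate $P(r)$ on $\mathcal F$ asserting that the leaves of $\mathcal F$ passing through points just above $(r,0)$ remain confined to a single half of the double when followed downward in $y$ toward the Pr\"ufer/Moore structure. Using a countable foliated atlas of $\mathcal F$ (available since $2P_{A,B}$ is separable), $P$ is encoded as a Borel property of $r\in U'$. The dichotomy above gives: $P(r)$ holds for all $r\in B\cap U'$ (by the Moore-spike geometry), whereas $P(r)$ fails for all $r\in A\cap U'$ except possibly the at-most-countable set of $r\in A$ for which $I_r$ is itself contained in a leaf of $\mathcal F$ (the countability coming from local finiteness of the family $\{I_r\}_r$ and separability). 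Combining $G_\delta$-density of $B$ and uncountability of $A$ in open subintervals via Baire category then produces the contradiction: the set of $r$ where $P$ holds must be ``large'' enough to meet $A$ outside the countable exceptional set, but this contradicts the failure of $P$ at Pr\"ufer points.

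The main obstacle is formalising the predicate $P(r)$ rigorously as a Borel (ideally open) property of $r\in U'$ that propagates via Baire category in the right way, and verifying that the Moore-spike side genuinely forces it while the Pr\"ufer-crossing side genuinely prohibits it apart from the countable exceptional set. As the authors note, the proof is ``elementary but longer than expected,'' so I expect the bulk of the work to be a careful direct analysis of the foliation in fixed local coordinates near each $0\in I_r$, rather than abstract descriptive set theory.
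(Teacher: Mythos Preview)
Your high-level architecture is right: reduce by squatness to short leaves, use compactness of $K$ to restrict to a subinterval of $U$, isolate a countable exceptional set, and finish with a Baire-category argument pitting the dense $G_\delta$ $B$ against the everywhere-uncountable $A$. This matches the paper exactly. But the specific invariant you propose and your identification of the countable exceptional set are both off, and the main technical lemma is missing.

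First, the countable set is backwards. For $r\in A$, the generic behaviour is that $I_r$ \emph{is} a leaf; the set $D=\{r\in A: I_r\text{ is not a leaf}\}$ is what turns out to be countable (Lemma~\ref{31bis1} in the paper), by a tube-neighbourhood argument using a countable dense family of leaves in $P^o_1\cup P^o_2$. Your parenthetical about ``local finiteness of $\{I_r\}_r$'' is wrong: that family is not locally finite. After discarding $D$ (and the analogous countable sets $E_1,E_2$ on the $B$-side), one works with a \emph{nice} foliation in which every $I_r$ ($r\in A$) is a leaf and every $I_{r,i}$ ($r\in B$) lies in a single leaf.

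Second, your ``one-sidedness'' predicate $P(r)$ does not separate $A$ from $B$ in the nice case. Near $0\in I_r$ for $r\in A$, a foliated chart has $I_r$ as a plaque with $P^o_1$ on one side and $P^o_2$ on the other; nearby leaves stay on their own side just as at a Moore point. The correct discriminating property is quantitative and lives entirely in one copy of $P^o$: let $T(r,n,\theta)$ assert that some leaf contains an arc running from the bottom vertex of the triangle $W_r(n,\theta)$ (apex at $(r,0)$, height $1/n$, aperture $\theta$) to its top edge. For $r\in B$ this holds for every $\theta$ and some $n$, because the Moore spike forces the leaf through $I_{r,i}$ to converge to the $0$-point. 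For $r\in A$ in a nice foliation it cannot hold, since $I_r$ is a leaf and no other leaf may touch it.

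Third, the crux you have not addressed is the propagation lemma: if $B(n,\theta)=\{r: T(r,n',\theta)\text{ for some }n'\le n\}$, one must show $\overline{B(n,\theta)}\cap(A\cup B)\subset\bigcup_m B(m,\theta')$ for any $\theta'>\theta$. This is the ``elementary but longer than expected'' step: one takes a sequence $r_i\to s$ in $B(n,\theta)$, looks at the accumulation set of the corresponding leaf-arcs inside $W_s(n,\theta)$, and argues via Poincar\'e--Bendixson and the doubled-triangle picture $W_s(n,\theta')\cup\widetilde W_s(n,\theta')$ that some leaf must traverse $W_s(m,\theta')$ for some $m$. Only with this closure property does Baire category bite: some $B(n,\theta)$ is dense in an open $V\subset U$, hence $V\cap A\subset\bigcup_m B(m,\theta')$, so some $r\in A\cap V$ satisfies $T(r,m,\theta')$, contradicting niceness.
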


For example, choose $B$ to be a dense $G_\delta$ of Lebesgue measure zero (such as the Liouville numbers), and set $A$ to be its complement. The theorem will follow from a series of lemmas, some of which do not use all the assumptions, so for now we only assume that $A,B$ are subsets of $\R$. We begin with some notation.

When $r\in A$, we still write $I_r$ for the subsets of $2P_{A,B}$ corresponding to the former boundaries that are now `bridging' the two copies of $P_{A,B}$. When $r\in B$ we write $I_{r,1},I_{r,2}$ for the two copies of $I_r$ in $2P_{A,B}$. We write $P^o_1,P^o_2$ for the copies of $P^o$. When $\mathcal{F}$ is a foliation on $2P_{A,B}$ we let
\begin{align*}
  D &=\{r\in A\,:\, I_r\textrm{ is not a leaf of }\mathcal{F}\},\\
  E_i &=\{r\in B\,:\, I_{r,i}\textrm{ is not contained in a single leaf of }\mathcal{F}\},\, i=1,2.
\end{align*}

\begin{lemma}\label{31bis1}
  Let $A,B\subset\R$, and $\mathcal{F}$ be a foliation on $2P_{A,B}$. Then $D,E_1,E_2$ are all countable.
\end{lemma}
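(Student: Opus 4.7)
The plan is to combine separability of $2P_{A,B}$ (equivalently, the countable chain condition) with a local transversality analysis of the boundary components, exploiting that the interior $P^o_1\cup P^o_2$ is a disjoint union of two copies of the upper half plane and hence second-countable. I will focus on $D$; the arguments for $E_1$ and $E_2$ will be parallel, with $I_{r,i}$ in place of $I_r$.

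The first step is a dichotomy: for each $r\in A$, either $I_r$ is a leaf of $\mathcal F$, or there exists $p_r\in I_r$ at which $I_r$ is \emph{transverse} to $\mathcal F$, meaning that in every foliated chart around $p_r$ the image of $I_r$ does not locally coincide with a plaque. Indeed, if $I_r$ were locally tangent to $\mathcal F$ at every point, an open-closed connectedness argument would place $I_r$ inside a single leaf $L$; since $I_r\cong\R$ is closed in $2P_{A,B}$, a case-by-case inspection of the four topological types of 1-manifold leaves ($\R$, $\mathbb S^1$, $\LL_+$, $\LL$) would then force $I_r=L$. Hence $r\in D$ forces the existence of such a transverse point.

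Next, for each $r\in D$ I will pick such a $p_r$ and a foliated chart $(U_r,\varphi_r)$ around it. Using the Pr\"ufer-Moore topology from Section~\ref{PruferMoore}, in which basic neighborhoods of $p_r$ consist of an interval on $I_r$ together with sectors in $P^o_1$ and $P^o_2$ emanating from the single Euclidean apex $(r,0)$, I can shrink $U_r$ to meet no other boundary component. The image $\varphi_r(I_r\cap U_r)$ then separates $\varphi_r(U_r)$ into two open pieces; I let $V_r\subset P^o_1$ be the piece lying in the first copy, which appears as a sector in $P^o_1\cong\R\times(0,\infty)$ whose unique Euclidean apex is $(r,0)$.

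The core of the argument will be to produce from each $r\in D$ a non-empty open subset $V_r'\subset V_r$ such that the family $\{V_r'\}_{r\in D}$ is pairwise disjoint; by CCC in the separable space $2P_{A,B}$, this immediately gives the countability of $D$. The plan is to do this by transfinite recursion along a well-ordering of $D$, at stage $r$ taking $V_r'$ to be a small Euclidean open disk inside $V_r$ close to the apex $(r,0)$ and disjoint from the already-constructed (countable) family $\{V_{r'}'\}_{r'<r}$. The main obstacle will be verifying that the construction never stalls: at each stage such a disk must exist, which should follow from the distinctness of the apices $(r,0)$ on the real axis, since a countable union of Euclidean disks whose closures miss $(r,0)$ cannot exhaust every Euclidean neighbourhood of a fresh apex. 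A cleaner alternative would be to associate to each $r\in D$ a countable witness (for example, a rational basic open in $P^o_1$ together with some canonical combinatorial datum) and show the assignment is at most countable-to-one.
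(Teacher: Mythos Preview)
Your dichotomy in the first step is fine, and so is the idea of passing to a transverse point $p_r\in I_r$ with a foliated chart meeting no other boundary component. The problem is the transfinite recursion. Your justification for why it never stalls --- ``a countable union of Euclidean disks whose closures miss $(r,0)$ cannot exhaust every Euclidean neighbourhood of a fresh apex'' --- is simply false. The point $(r,0)$ is not in $P^o_1$, so \emph{every} open disk in $P^o_1$ has closure missing $(r,0)$; and countably many such disks can cover all of $P^o_1$, in particular all of $V_r$. Nothing in your construction forces the earlier disks $V_{r'}'$ (which lie in sectors emanating from other apices $(r',0)$) to stay away from the sector $V_r$: sectors from distinct apices overlap freely in the upper half plane. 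So at some countable stage the recursion may well have nowhere to go, and the CCC argument collapses.

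What you are missing is exactly the ingredient the paper uses: squatness of $2P_{A,B}$. Squatness forces every leaf to be short, and a short leaf, being separable, can meet only countably many of the pairwise clopen pieces $I_r$ of the closed set $\bigcup_s I_s$. The paper then takes a countable family $\{F_n\}$ of leaves dense in $P^o_1\cup P^o_2$ and, via a tubular neighbourhood around an arc of the offending leaf, shows that for every $r\in D$ some $F_n$ already meets $I_r$. Hence $D\subset\{r:\exists n,\ F_n\cap I_r\neq\emptyset\}$, which is countable. Your ``cleaner alternative'' of assigning to each $r\in D$ a rational witness $q_r\in P^o_1$ is essentially this argument, but to conclude that the assignment is countable-to-one you must know that the leaf through $q_r$ meets only countably many $I_s$ --- and that again needs squatness, which you never invoke.
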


\begin{proof}[Proof of Lemma \ref{31bis1}] Since $2P_{A,B}$ is squat, all the leaves are short, and thus a leaf can intersect only countably many sets $I_r$. Let $\{F_n\}_{n\in\omega}$ be a set of leaves dense in $P^o_1\cup P^o_2$, and set
$$
  D'=\{r\in A\,:\,\exists n\in\omega
  \textrm{ with }F_n\cap I_r\not=\emptyset\}.
$$
We show that $D'=D$, which proves the lemma for $D$. First, notice that
$$
  D=\{r\in A\,:\, \textrm{for some leaf }F,\ F\cap I_r\not=\emptyset\not=F\cap (P^o_1\cup P^o_2)\}.
$$
Let $F$ be a leaf that intersects both $P^o_1\cup P^o_2$ and some $I_r$. Fix $x\in F\cap (P^o_1\cup P^o_2)$, and follow the leaf (in one direction or the other) until it reaches $I_r$. Let $y$ be the first point in $I_r$ thus reached, and let $z\in F$ be a point further on. Any neighbourhood of $y$ contains a point of $I_r$ that is {\em not} in $F$ (see Figure \ref{figureI_r}, left). By \cite[Lemma 2.3]{BGG}, there is a `tubular neighbourhood' $T$ of the portion of $F$ between $x$ and $z$ with a homeomorphism $T\to(0,1)^2$ such that the leaves in $T$ are the preimages of the horizontals. Therefore, there is some $n$ for which $F_n$ intersects $I_r$ as well (see Figure \ref{figureI_r}, right). It follows that $D'=D$.

\begin{figure}[h]
\centering
    \epsfig{figure=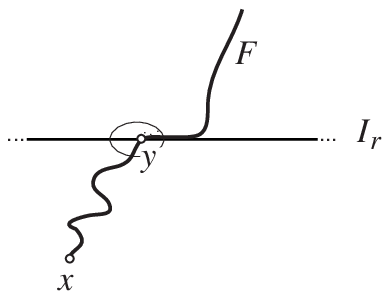, width=40mm}\hskip 1cm
    \epsfig{figure=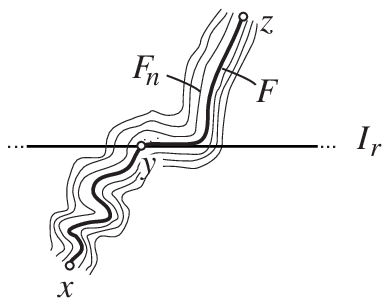, width=40mm}
    \caption{\label{figureI_r} Leaf intersecting $I_r$ and its tubular neighborhood.}
\end{figure}

Now, fix $i$. For $r\in B$, there is a homeomorphism $h:I_{r,i}\to[0,1)$. Denote by $G_r$ the leaf passing through $h^{-1}(0)$. Notice that
$$
  E_i=\{r\in B\,:\, \textrm{for some leaf }F,\ F\cap I_{r,i}\not=\emptyset \textrm{ and }F\not= G_r\}.
$$
Define $\mathcal{G}$ to be the set of leaves different from all $G_r$ but intersecting some $I^i_r$. Taking a countable subset of leaves dense in $(P^o_1\cup P^o_2)\cap(\cup_{F\in\mathcal{G}}F)$, one sees as in the proof for $D$ that $E^i$ must be countable.
\end{proof}

This lemma enables us to look only at {\em nice} foliations, i.e., those satisfying:
\begin{align}
  I_r&\textrm{ is a leaf of $\mathcal{F}$ if }r\in A \label{IrA}\\
  I_{r,i}&\textrm{ is contained in a single leaf of $\mathcal{F}$ if }r\in B,\,i=1,2. \label{IrB}
\end{align}

We now look at each copy of $P_{A,B}$ separately, and drop the superscripts on $I^i_r$. We define $W_r(n,\theta)$ for $r\in\R$, $n\in\omega$ and $\theta\in (0,\pi)$ to be the interior of the triangle shown in Figure \ref{figure4}. $W_r(n,\theta)$ is centred around the line of points with first coordinate $r$, has height $1/n$ and lower vertex $(r,0)$: by a slight abuse of language, we shall say that $r$ is the bottom vertex of $W_r(n,\theta)$, even if the point $(r,0)$ has been replaced by $I_r$ in the Pr\"uferisation.

\begin{figure}[h]
\centering
    \epsfig{figure=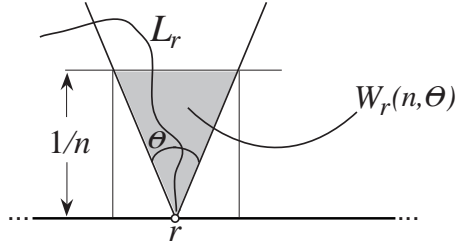, width=60mm}
    \caption{\label{figure4} $W_r(n,\theta)$.}
\end{figure}

Let $T(r,n,\theta)$ denote the following property (compare Figure \ref{figure4}): 
\begin{quote} There is a leaf $L_r\in\mathcal{F}$ such that $L_r\cap W_r(n,\theta)$ contains an arc that goes from the bottom vertex of $W_r(n,\theta)$ to its upper boundary.
\end{quote}

\begin{lemma}\label{31bis3}
Let $A,B\subset\R$ and $\mathcal{F}$ be the restriction to one of the copies of $P_{A,B}$ of a nice foliation on $2P_{A,B}$. For each $r\in B$ and $\theta\in(0,\pi)$ there is some $n\in\omega$ such that $T(r,n,\theta)$.
\end{lemma}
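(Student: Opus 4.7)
My plan is to exploit niceness together with a careful analysis of the Pr\"ufer--Moore topology at the tip of the spike $I_r$. By niceness, $I_r\subset L_r$ for a single leaf $L_r$, and since $2P_{A,B}$ is squat all leaves are short; as $I_r\cong[0,\infty)$ injects into $L_r$, we must have $L_r\cong\R$, and the tip $p$ of $I_r$ (the image of $0$ under $y\sim -y$) is an \emph{interior} point of the 1-manifold $L_r$. In any foliated chart around $p$, the plaque through $p$ therefore extends to both sides: one side must be an initial segment of $I_r$, and the other side is forced into $P^o$, since locally there is no other place for it to go.

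Next I would make the local geometry of $p$ explicit. A basis at $p$ in the Pr\"ufer--Moore topology is given by the sets
\[
V_{N,\delta}=\{(x,t)\in P^o\,:\,|x-r|<t/N,\ 0<t<\delta\}\ \cup\ \text{(an initial segment of }I_r\text{)},
\]
each being a thin near-vertical cone attached to a short piece of the spike. Given $\theta$ I fix $N_0>\cot(\theta/2)$, so that the angular condition $|x-r|/t<1/N_0$ of $V_{N_0,\delta_0}$ is strictly tighter than the angular condition $|x-r|/t<\tan(\theta/2)$ defining the slanted sides of $W_r(n,\theta)$, for every $n$. Using that $\{V_{N,\delta}\}$ is a neighbourhood basis, I shrink a foliated chart $\varphi\colon U\to(-1,1)^2$ around $p$ so that $U\subset V_{N_0,\delta_0}$, with $\varphi(p)=(0,0)$ and $\varphi(I_r\cap U)\subset[0,1)\times\{0\}$. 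The non-spike half of the central plaque then yields an arc $\gamma(s):=\varphi^{-1}(-s,0)$, $s\in[0,1)$, of $L_r$ starting at $p$ and lying entirely in the narrow cone $V_{N_0,\delta_0}\cap P^o$.

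Finally I would extract the correct $n$ by a continuity argument. Write $\gamma(s)=(x(s),t(s))$ for $s>0$. Convergence $\gamma(s)\to p$ in the Pr\"ufer--Moore topology forces $t(s)\to 0$ as $s\to 0^+$, while $\gamma\subset V_{N_0,\delta_0}$ gives $|x(s)-r|/t(s)<1/N_0<\tan(\theta/2)$ throughout. Pick any $s_0>0$ and choose $n$ with $1/n<t(s_0)$. Setting $s^\ast=\inf\{s\in(0,s_0]:t(s)\ge 1/n\}$, continuity together with $t(s)\to 0$ delivers $s^\ast>0$, $t(s^\ast)=1/n$, and $t(s)<1/n$ on $(0,s^\ast)$. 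Hence $\gamma|_{(0,s^\ast)}\subset W_r(n,\theta)$, with $p$ as its limit point on the ``bottom vertex'' side and $\gamma(s^\ast)$ lying on the upper boundary; this arc witnesses $T(r,n,\theta)$.

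The main obstacle is conceptual rather than computational: one must correctly identify the local Pr\"ufer--Moore picture at the tip of a Moorized spike, namely that a basic neighbourhood is a near-vertical cone glued to an initial spike segment, and conclude that the non-spike half of the central foliated plaque at $p$ is obliged to enter $P^o$ through a narrow near-vertical wedge. Once that local picture is secured, the rest reduces to a standard intermediate-value argument.
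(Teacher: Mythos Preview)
Your proof is correct and follows the same idea as the paper's, which is essentially a two-sentence sketch: the leaf $F_r$ containing $I_r$ must pass through the tip of the Moorized spike, and the definition of the Pr\"ufer--Moore topology at that tip forces the continuation of $F_r$ into $P^o$ to enter through an arbitrarily thin vertical wedge, hence through $W_r(n,\theta)$ for suitable $n$. You have carefully unpacked each step the paper leaves implicit --- the identification of $L_r\cong\R$, the explicit basic neighbourhoods $V_{N,\delta}$, the choice of a foliated chart inside one of them, and the intermediate-value extraction of $n$ --- so your argument is a faithful and complete elaboration of the paper's proof rather than a different route.
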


\begin{proof}
For $r\in B$, let $F_r$ denote the leaf containing $I_r$. Then $F_r$ must converge to the $0$-point of $I_r$. The result then follows from the definition of the topology near this point.
\end{proof}

Let $B(n,\theta)=\{r\in A\cup B\,:\,T(r,n',\theta)\text{ for some }n'\le n\}$.

\begin{lemma}\label{31bis4}
With the same assumptions as in Lemma \ref{31bis3}, we have that $\wb{B(n,\theta)}\cap (A\cup B)\subset \cup_{m\in\omega}B(m,\theta')$ for all $\theta' > \theta$.
\end{lemma}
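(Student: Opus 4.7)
When $r\in B$ the statement is immediate: Lemma~\ref{31bis3} applied directly with the angle $\theta'$ yields $T(r,m,\theta')$ for some $m$, and hence $r\in B(m,\theta')$. Assume therefore $r\in A$, so that $I_r$ is a leaf of $\mathcal F$ by niceness~(\ref{IrA}). Pick a sequence $r_k\in B(n,\theta)$ with $r_k\to r$ in $\R$ and $r_k\neq r$, and for each $k$ fix $n_k\le n$ together with an arc $\gamma_k\subset L_{r_k}\cap W_{r_k}(n_k,\theta)$ witnessing $T(r_k,n_k,\theta)$, with upper endpoint $q_k$ on the top edge of $W_{r_k}(n_k,\theta)$. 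Pigeonholing on the finite set $\{1,\dots,n\}$, combined with compactness of a suitable horizontal strip of height $1/n^*$, lets us extract a subsequence on which $n_k=n^*$ is constant and $q_k\to q$. The inequality $|q_k^x-r_k|\le(1/n^*)\tan(\theta/2)$ together with $r_k\to r$ forces $|q^x-r|\le(1/n^*)\tan(\theta/2)<(1/n^*)\tan(\theta'/2)$, so $q$ lies strictly inside the top edge of $W_r(n^*,\theta')$.

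\textbf{Continuity step.} Let $L$ be the leaf of $\mathcal F$ through $q$. The plan is to construct an arc $\eta\subset L$ starting at $q$ in the direction of decreasing $y$-coordinate, by pasting together foliated charts (\cite[Lemma~2.3]{BGG}) along $L$: in each chart the plaques of $L_{r_k}$ traced backward from $q_k$ along $\gamma_k$ converge uniformly to the plaque of $L$ through $q$, and a diagonal extraction propagates this local convergence to the whole downward portion $\eta$. Passing to the limit in the bound $|x_k-r_k|\le y_k\tan(\theta/2)$ satisfied by every approximating point $(x_k,y_k)\in\gamma_k$ produces $|x-r|\le y\tan(\theta/2)<y\tan(\theta'/2)$ at every $(x,y)\in\eta$, so $\eta$ remains strictly inside $W_r(n^*,\theta')$ as long as $y>0$. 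Together with the endpoint $q$ this furnishes the arc required for $T(r,n^*,\theta')$ \emph{provided} $\eta$ accumulates at $0\in I_r$ in the Pr\"uferised topology.

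\textbf{Main obstacle.} The hard step is this last accumulation. It requires the strictly stronger condition $|x-r|/y\to 0$ along a subsequence of points of $\eta$ with $y\to 0$; the bound $|x-r|/y\le\tan(\theta/2)$ alone only confines $\eta$ to the closure of $W_r(n^*,\theta')$ without forcing it to the Pr\"ufer apex. Moreover, the Pr\"ufer topology keeps the boundary arcs $I_{r_k}$ and $I_r$ mutually disjoint, so the accumulation of each $\gamma_k$ at $0\in I_{r_k}$ does not transfer directly to $\eta$. The idea is to exploit the strictly positive angular margin $\delta=(\tan(\theta'/2)-\tan(\theta/2))/2$ and perform a diagonal selection: for each $j$ choose $k_j$ so that $|r_{k_j}-r|$ is very small compared to the height at which $\gamma_{k_j}$ enters a $1/j$-narrow vertical sector at $r_{k_j}$ (using that $|x-r_{k_j}|/y\to 0$ along $\gamma_{k_j}$ as $y\to 0^+$), then pick $(x_j,y_j)\in\gamma_{k_j}$ realising $y_j\to 0$, $|x_j-r_{k_j}|/y_j\to 0$ and $|r_{k_j}-r|/y_j\to 0$ simultaneously. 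Combined with the chart-by-chart convergence of the continuity step, this selection places the points $(x_j,y_j)$ on $\eta$ (up to further extraction), so $\eta$ accumulates at $0\in I_r$, yielding $T(r,n^*,\theta')$ and hence $r\in B(n^*,\theta')\subset\bigcup_m B(m,\theta')$. Balancing the three scales $y_j$, $|x_j-r_{k_j}|/y_j$ and $|r_{k_j}-r|/y_j$ at once is the technical heart of the argument.
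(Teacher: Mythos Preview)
Your reduction to $r\in A$ and the initial extraction are fine, but the argument breaks down at the ``main obstacle'' in a way that is not just technical.

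The decisive claim is that the diagonal selection places the points $(x_j,y_j)\in\gamma_{k_j}$ ``on $\eta$ (up to further extraction)''. This is not supported by the continuity step. Chart-by-chart convergence (equivalently, the tubular neighbourhood of \cite[Lemma 2.3]{BGG}) gives you information in only one direction: for every \emph{compact} sub-arc $\eta|_{[0,\ell]}$ there is $K$ such that the corresponding initial portions of $\gamma_k$, $k\ge K$, are uniformly close to it. It says nothing about the far ends of the $\gamma_k$, which is exactly where your points $(x_j,y_j)$ live (they are chosen with $y_j\to 0$, near the bottom vertex of $W_{r_{k_j}}$). Those points lie on the leaves $L_{r_{k_j}}$, not on $L$, and no amount of extraction moves them onto $\eta$.

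More seriously, the leaf $L$ through $q$ need not reach the bottom at all. It is entirely possible that $\eta$ descends only to some positive height and then returns to the top boundary (this is precisely what the paper isolates as Cases III/IV). In that situation no sub-arc of $L$ witnesses $T(r,m,\theta')$ for any $m$, so the strategy of extracting the witness from the single limit leaf $L$ cannot succeed. The paper's proof avoids this trap by working instead with the entire accumulation set $Y=\overline{\cup_i F_i}\setminus\cup_i F_i$, observing that $Y\cap S$ is a union of $S$-leaves confined to $\overline{W_r(n,\theta)}$, invoking Poincar\'e--Bendixson to force each such $S$-leaf to escape through the top or the bottom, and then running a short case analysis in the \emph{doubled} rectangle $W_s(n,\theta')\cup\widetilde W_s(n,\theta')$. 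In Case I the witness leaf is an $S$-leaf in $Y$ that exits the bottom twice, and one must pass to a larger $m\ge n$; this is a genuinely different leaf from your $L$. Your approach, which never looks at the second sheet of $2P_{A,B}$ and never allows $m>n^*$, cannot reproduce this.
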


\begin{proof}
Notice that Lemma \ref{31bis3} implies Lemma \ref{31bis4} for points in $B\cap\wb{B(n,\theta)}$, so we only need to check it for points in $A\cap\wb{B(n,\theta)}$. We shall consider induced foliations on various subspaces of $P_{A,B}$, so we will call a leaf of the induced foliation on $X\subset P_{A,B}$ an $X$-leaf. Let $n\in\omega$ and $\theta\in(0,\pi/2)$, and $r\in\wb{B(n,\theta)}\cap (A\cup B) - B(n,\theta)$. (Since $B(n,\theta)\subset B(n,\theta')$ for all $\theta'>\theta$, this is the only case we need to check.) Fix $\theta'>\theta$. We can assume without loss of generality that there is a sequence $\langle r_i\rangle$ in $B(n,\theta)$, $i\in\omega$, converging to some $s\in A$, with $r_i<r_{i+1}$ $\forall i$. Denote the corresponding triangles by $W_{r_i}(n,\theta)$. Figure \ref{figure5} illustrates the situation.
 
\begin{figure}[h]
\centering
    \epsfig{figure=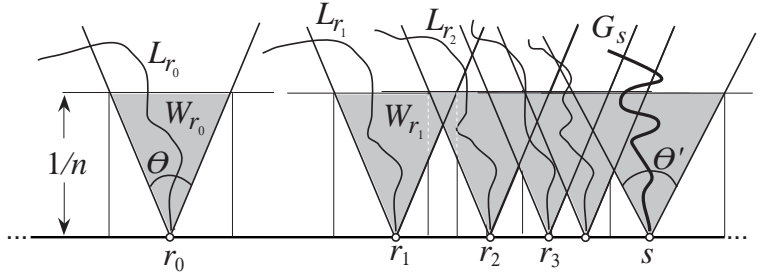, width=100mm}
    \caption{\label{figure5} An accumulation of sectors.}
\end{figure}

Our goal is to find a leaf $G_s$ going through all of $W_s(m,\theta')$, as in Figure \ref{figure5}, for some $m\ge n$ (we drew the case where $m=n$). Denote by $S$ the strip $\R\times (0,1/n)$ in $P^o$ (viewed again as the upper half plane). For each $i$, choose an $S$-leaf $F_i\subset L_{r_i}\cap W_{r_i}(n,\theta)$ going through all of the triangle. ($L_i\cap S$ may have more than one component going through all the triangle, we choose one as $F_i$.) Consider $Y=\wb{\cup_{i\in\omega} F_i} \setminus \cup_{i\in\omega} F_i$, the set of accumulation points of sequences $\langle z_i\rangle_{i\in\omega}$ with $z_i\in F_i$. Then $Y\cap S$ is a union of $S$-leaves, and $Y\subset \wb{W_r(n,\theta)}$, so $Y\cap S\subset W_r(n,\theta')$. Moreover, $Y$ contains points at each height in $(0,1/n)$. Since $Y\subset \wb{W_r(n,\theta)}$, applying some Poincar\'e--Bendixson theory shows that the $S$-leaves in $Y$ must escape from $W_r(n,\theta)$ by either its upper boundary or its bottom vertex (which is really a line segment in $P_{A,B}$).
   
We return to the foliation on all of $2P_{A,B}$, and consider $W_s(n,\theta')\cup\wt{W_r}(n,\theta')$, where $\wt{W_s}(n,\theta')$ denotes the copy of $W_s(n,\theta')$ in the other copy of $P_{A,B}$. The closure in $2P_{A,B}$ of this union is homeomorphic to a rectangle of width $\theta'$ and height $2/n$ (see Figure \ref{figures8-9}, left).

\begin{figure}[h]
\centering
    \epsfig{figure=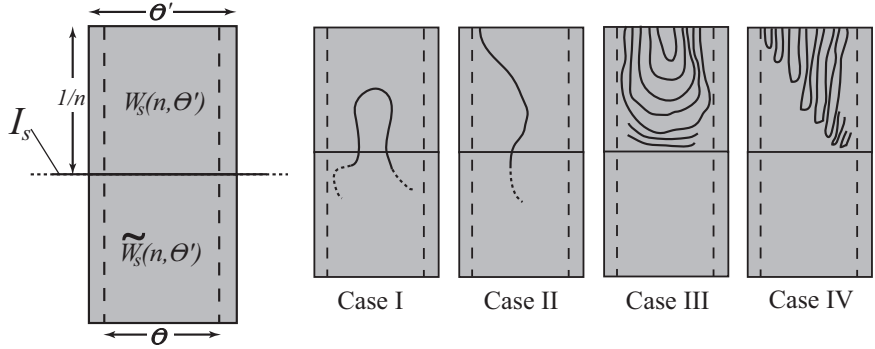, width=115mm}
    \caption{\label{figures8-9} Leaves inside $W_s(n,\theta')$.}
\end{figure}

Since $Y$ contains points at each height, we have the $4$ cases depicted in Figure \ref{figures8-9} (right): 
\begin{itemize}
     \item[--] There is a leaf which exits $W_s(n,\theta')$ twice by its bottom side (or as we wrote above, by the vertex). (Case I.) Then for some $m\ge n$, $r\in B(m,\theta')$.
     \item[--] There is a leaf going right through $W_s(n,\theta')$ (Case II), which yields $r\in B(n,\theta')$.
     \item[--] There is a sequence of leaves, each exiting $W_s(n,\theta')$ twice by its upper side, which approach the bottom side. Case III is when the leaves are nested, and Case IV when they are not. There is thus an accumulation point of this sequence on $I_s$, that belongs to some leaf $F$. Given any compact portion $F'\subset F$, by \cite[Lemma 2.3]{BGG} there is a small tubular neighbourhood of $F'$ that is the image through a foliated chart of $(0,1)^2$ foliated horizontally. It follows that $F$ can neither enter $\wt{W_s}(n,\theta')$, nor stay on $I_r$, and it must exit $\wb{W_s(n,\theta)}$ by its upper side (since there are leaves doing the same arbitrarily close). Thus, $F$ goes right through $W_s(n,\theta')$, and $s\in B(n,\theta')$.            
   \end{itemize} 
   The lemma is proved.
\end{proof}

We can now prove Theorem \ref{thm31bis}.

\begin{proof}[Proof of Theorem \ref{thm31bis}]
Let $S$ be the set of $r\in A\cup B$ such that $K$ intersects $I_r$ (if $r\in A$) or $I_{r,1} \cup I_{r,2}$ (if $r\in B$). By compactness of $K$, $S$ is finite. So, thinking of $P_i$ ($i=1,2$) again as the upper half plane, there must be a rectangle (in both $P_i$) $(a,b)\times (0,c)$ that avoids $K$, such that $U\cap (a,b)\not=\emptyset$. Thus, up to replacing $A,B$ by $A\cap (a,b), B\cap (a,b)$, one can assume that $K$ is empty.  
  
Now, by Lemma \ref{31bis1}, considering $2P_{A\setminus D, B\setminus (E_1\cup E_2)}$ if necessary, we may assume that the foliation is nice.

Fix $\theta\in(0,\pi/2)$. By Lemma \ref{31bis3}, $B\subset \cup_{n\in\omega}B(n,\theta)$. Since $B$ is a $G_\delta$ that is dense in $U$, there must be some $n$ such that $B(n,\theta)$ is dense in some open $V\subset U$. Hence, by Lemma \ref{31bis4}, $\wb{V}\subset B(n,\theta')$ for all $\theta' >\theta$. Now if $r\in B(n,\theta')\cap(A\cup B)$, then $r$ satisfies $T(r,n,\theta')$, so the leaf $L_r$ must intersect $I_r$, which is clearly impossible if $r\in A$, since the foliation is nice. However, by the assumption on $A$, there is some $r\in A\cap \wb{V}$, a contradiction.
\end{proof}

\begin{rem} 
{\rm When $A$ is uncountable, $2P_{A,B}$ is not homotopy equivalent to a CW-complex by \cite[Theorem 1] {MesziguesBridges}. However, for any $A,B\subset\R$, the surface obtained by taking $P_{A,B}$ as above and attaching collars $(0,1)\times[0,1)$ to the $I_r$ for $r\in A$ is contractible (see \cite[Proposition 3]{MesziguesBridges}, already implicit in \cite{CalabiRosenlicht}). The proof of Theorem \ref{thm31bis} can easily be arranged to show that this collared version does not possess a foliation (for appropriate $A,B$). We thus have examples in two classes: a separable one which is not homotopy equivalent to a CW-complex, and a contractible non-separable one.}
\end{rem}

\medskip

The assumption that $A$ is (somewhere) dense is necessary: If $A\subset[0,1]$ is the classical Cantor ternary set and $B$ its complement in $[0,1]$, then $2P_{A,B}$ and $2P_{B,A}$ both admit foliations. The construction in the upper half plane is the following. For all $r\in A$, let $D_r$ be a disk of radius $1$ tangent to the bottom `boundary' at $(r,0)$. For all intervals $I$ in $B=\R \setminus A$, let $f_1^I < f_2^I < f_3^I$ be $\mathcal{C}^\infty$ curves tending to the extremities of $I$ with slope $0$ and avoiding all $D_r$ for $r\in A$. Such curves are depicted in Figure \ref{fig10} (where the superscripts $I$ are omitted), for an interval $I=(a,b)$. Notice that a curve tending to the bottom `boundary' of $P^o$ with slope 0 does not converge to any point in $P_{A,B}$. Foliate $P_{A,B}$ under each $f^I_3$ as in the same figure, and outside by horizontal lines. For $P_{B,A}$, we need only $f_1^I<f_3^I$, and we foliate below $f_3^I$ as in Figure \ref{fig11}, and put the vertical foliation outside. Taking the double in each case gives the expected foliated surface without boundary.

\begin{figure}[h]
\centering
    \epsfig{figure=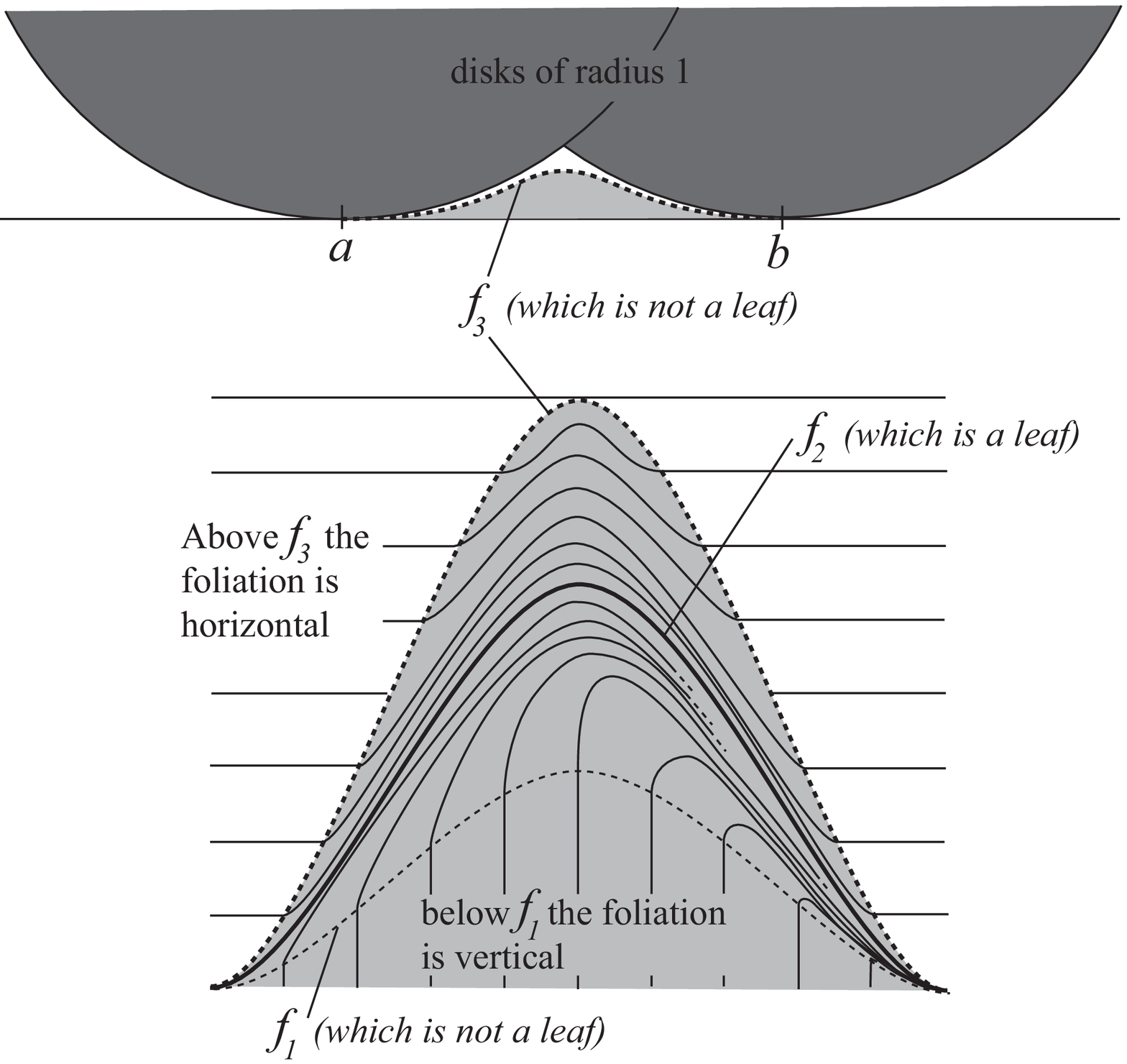, width=75mm}
    \caption{\label{fig10} Foliation on $P_{A,B}$, with $A$ the Cantor set, $B=[0,1] \setminus A$.}
\end{figure}

\begin{figure}[h]
\centering
    \epsfig{figure=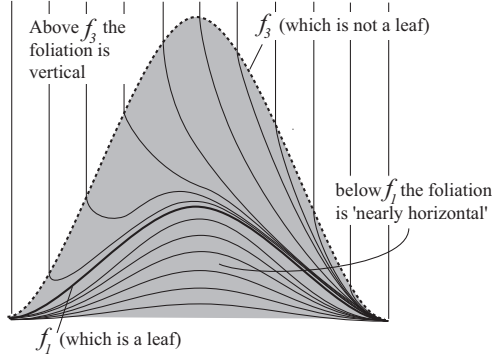, width=65mm}
    \caption{\label{fig11} Same as above, exchanging $A$ and $B$.}
\end{figure}

\subsection{A separable, simply connected surface lacking a foliation}\label{sep-simplconn-nofol}
In this subsection we construct a surface as in the title. Indeed, even removal of a compact subset does not allow a foliation.

Begin with the subset of the Nyikosization of the plane $\R^2$ described in Proposition \ref{thmkiekie} consisting of the copy of the long ray together with the quadrant $Q=\{(x,y)\in\R^2 : x>0 {\rm\ and\ } y<0\}$ but then remove all the points of the subset $\omega_1$ of the long ray. For each $\alpha\in\omega_1$ let $I_\alpha$ denote the open interval $(\alpha,\alpha+1)$ of the long ray. Now Moorize each of the intervals $I_\alpha$ around the point $\alpha + 1/2$ to get the desired manifold $M$. Denote by $I_\alpha'$ the half-open interval coming from the Moorization of $I_\alpha$.

Suppose that $M$ is foliated by $\mathcal F$. As in Lemma \ref{31bis1}, all but countably many of the intervals $I_\alpha'$ lie entirely in a leaf of $\mathcal F$. Each of these leaves then extends a positive distance into $Q$ and hence there is a positive integer $n$ so that uncountably many of these extensions reach the vertical line $x=1/n$; set $S=\{\alpha\in\omega_1 : I_\alpha' \mbox{ lies entirely in a leaf of } \mathcal F \mbox{ and its extension into } Q \mbox{ meets the line } x=1/n\}$. For each $\alpha\in S$ let $y_\alpha\in(-\infty,0)$ be such that $(1/n,y_\alpha)$ is the first point at which the leaf containing $I_\alpha'$ meets the line $x=1/n$. It is readily shown that if $\alpha<\beta$ then $y_\alpha<y_\beta$ as the leaves cannot cross. The sequence $\langle y_\alpha\rangle_{\alpha\in S}$ yields a contradiction because it is an uncountable, strictly increasing sequence in $\R$.

It is readily checked that $M$ is separable (any countable dense subset of $Q$ will do) and simply connected. A compact subset of $M$ will not interfere with the argument regarding non-foliability except to the extent that countably many more elements may need to be removed from $S$.




\section{Counting foliations on some $\omega$-bounded surfaces}

\subsection{A surface with a unique foliation}\label{sec-one-foliation}

Here we prove Proposition~\ref{one-foliation}, that is, the `paper cone' manifold $2\overline{Q}$ (where $\overline{Q}=\{(x,y)\in {\mathbb L}^2 : -y \le x\le y\}$) has only one foliation up to homeomorphism.

\begin{proof}[Proof of Proposition~\ref{one-foliation}] 
{\it Existence}: consider the obvious vertical foliation (illustrated on the left part of Figure~\ref{cornet}). {\it Uniqueness}: \cite[Proposition~6.1]{BGG} applied to each embedded copy of $Q$ in $2\overline{Q}$, reveals that the foliation is either asymptotically horizontal (H) or vertical (V). Let us analyse the three possible combinations (H-H), (H-V) and (V-V) of behaviours occurring on the `front-' respectively `back-side' of the doubled manifold.

\begin{figure}[h]
\centering
    \epsfig{figure=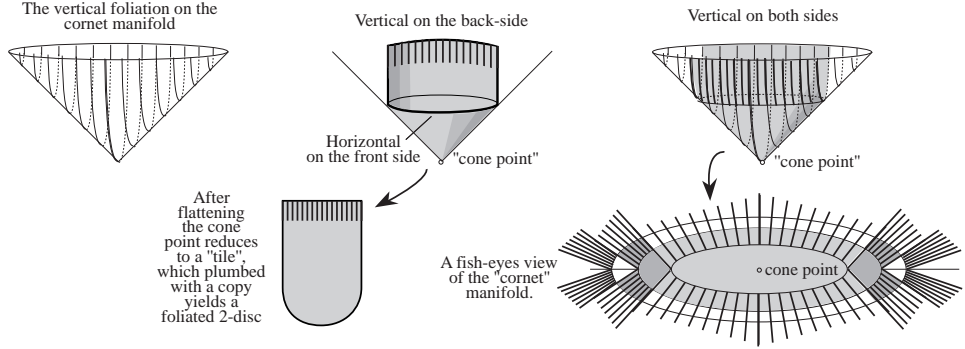,width=127mm }
    \caption{\label{cornet} The `paper cone' manifold: a surface with a unique foliation}
\end{figure}

$\bullet$ If (H-H), then after passing to the intersection of two closed unbounded sets, we obtain an impossible foliated $2$-disc.

$\bullet$ If (H-V), then we concentrate on the shaded region depicted on the central part of Figure~\ref{cornet}. Flattening this region yields the `tile' depicted right below, which in turn after plumbing with a copy along the `framed' boundary-data yields again a foliated $2$-disc.

$\bullet$ Finally when (V-V), then we identify easily compact subregions, in fact topological squares (see the different shadings on the right part of Figure~\ref{cornet}), each of which  admits a unique foliated extension according to \cite[Lemma~6.4]{BGG}. This completes the proof.
\end{proof}

\begin{rem} {\it (An origami avoiding any fish-eyes perception.)} {\rm A convenient trick to picture the manifold $2\overline{Q}$ involves cutting its back-side along the vertical axis then unfolding both octants via reflections at angles $\pi/4$ and $3\pi/4$. The cut manifold unfolds to cover the entire long upper half-plane ${\mathbb L} \times {\mathbb L}_{\ge 0}$, except that one has to identify $(x,0)$ with $(-x,0)$ for all $x\in{\mathbb L}_{\ge 0}$ (reminiscent of the Moorization process). This trick can be used to visualise without any fish-eye distortion the regions where we are applying \cite[Lemma~6.4]{BGG}. In fact, with this method it is also possible to analyse foliations on the bag-pipe surfaces $O_{g,n}$ of genus $g$ and ``pipus'' $n$, i.e. having $n$ pipes modelled on the conical pipes $2\overline{Q}\setminus\{ 0\}$.}

\end{rem}


\subsection{Simply-connected $\omega$-bounded surfaces with infinitely many foliations}\label{pipus-zero}

Here we prove Proposition~\ref{infinitely-many-foliations}.

Our example is built with the (closed) first octant of $\LL^2$ as the building block, defined as before by $ \OO=\{(x,y)\in\LL^2\,:\,0\le y\le x\}$. In our pictures, we draw an arrow from the horizontal to the diagonal in $\OO$, see Figure \ref{infa} (left).

\begin{figure}[h]
\centering
    \epsfig{figure=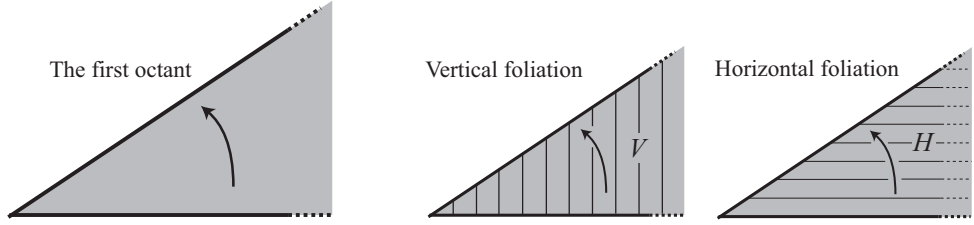,width=140mm}
    \caption{\label{infa} The octant and its trivial foliations.}
\end{figure}

The idea is the following (see Figure \ref{infb}, where one particular foliation is drawn as well on the surface). Take a countable number of copies of $\OO$, and then glue them together as follows. The $k$-th copy of $\OO$, which we label $\OO_k$, has a horizontal boundary $H_k$ and a diagonal boundary $D_k$. Consider the space given by the union of the $\OO_k$ where $H_{2k+1}$ is identified pointwise with $H_{2k+2}$ and $D_{2k}$ is identified with $D_{2k+1}$. (In fact, the union of $\OO_{2k}$ and $\OO_{2k+1}$ is a copy of $\LL_{\ge 0}^2$.) We then write $\Delta_k$ ($k=0,1,\dots$) for the images in the quotient of the old boundaries, counted counterclockwise (see Figure \ref{infb}, the thick lines). Say that $\Delta_k$ is a $2$-sided diagonal when $k$ is odd and a $2$-sided horizontal when $k$ is even. We then add a copy of $\LL_{\ge 0}$ that we call $\Delta_\omega$, to which the $\Delta_i$ accumulate, and another octant that we call $\OO_\omega$ with horizontal $\Delta_0$ and diagonal $\Delta_\omega$, to `close the circle' (again, see Figure \ref{infb}). Denote this surface by $S$. That $S$ is a longpipe follows from its definition, in fact one can easily write down explicitly $S=\cup_{\alpha\in\omega_1}S_\alpha$, with the $S_\alpha$ given by considering only the part of the octants where $x,y<\alpha$. Also, $S$ is simply connected.

The octant has two trivial foliations $\mathcal{H}$ and $\mathcal{V}$ given by the horizontal and the vertical lines (of course in the boundary there is a mixed tangent-transverse behavior), see Figure \ref{infa} (right). These foliations can be glued together to build infinitely many different foliations on $S$. The foliation $\mathcal F_i$ is drawn in Figure \ref{infb}. If one looks at the foliation restricted in the $\OO_k$, it will be horizontal in $\OO_0$, $\OO_{2i+1}$, $\OO_{2i+2}$ and $\OO_\omega$, and vertical in the other $\OO_k$.

\begin{figure}[h]
\centering
    \epsfig{figure=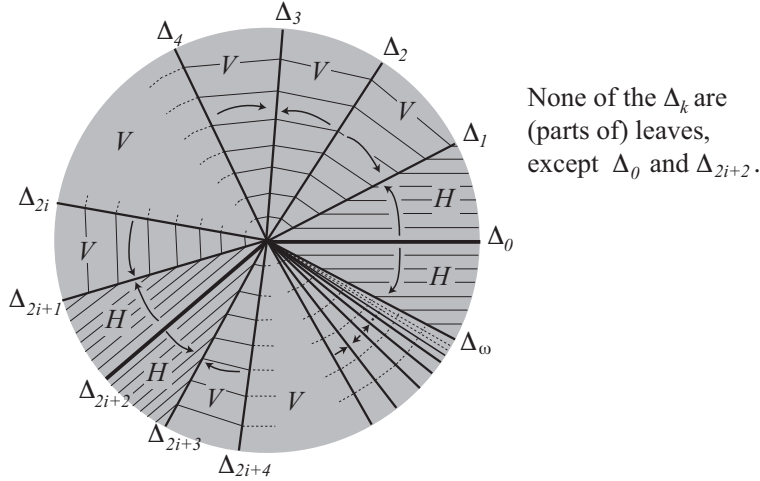,width=100mm}
    \caption{\label{infb} A surface with infinitely many foliations.}
\end{figure}

\begin{lemma}\label{lemmainfty}
There is no homeomorphism of $S$ that sends $\mathcal F_i$ to $\mathcal F_k$ if $i$ is different from $k$. 
\end{lemma}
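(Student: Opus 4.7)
The plan is to construct a topological invariant of the pair $(S,\mathcal{F}_i)$ that detects the integer $i$. The key observation is that among the seam curves $\Delta_k$, exactly two are themselves leaves of $\mathcal{F}_i$, namely $\Delta_0$ and $\Delta_{2i+2}$: these correspond to the two horizontal seams whose two adjacent octants are both horizontally foliated (the pair $\OO_\omega,\OO_0$ across $\Delta_0$, and the pair $\OO_{2i+1},\OO_{2i+2}$ across $\Delta_{2i+2}$). Every other $\Delta_k$ has at least one vertically foliated adjacent octant and is therefore transverse to $\mathcal{F}_i$ (the diagonal is never tangent to either of the two trivial foliations of $\OO$, and a horizontal seam with a vertical side has the vertical leaves crossing through it). The first step is to give a purely topological characterisation of these two distinguished "seam leaves" of $\mathcal{F}_i$, for instance as the leaves $L$ admitting a saturated tubular neighbourhood $L\times(-\epsilon,\epsilon)$ in which the nearby leaves are long lines running parallel to $L$ throughout their length without being interrupted by a diagonal seam---unlike a generic leaf of $\mathcal{F}_i$, whose nearby leaves must pass through a transverse $\Delta_k$ and re-emerge into a differently-foliated octant.

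Granting such a characterisation, any homeomorphism $\phi:(S,\mathcal{F}_i)\to(S,\mathcal{F}_k)$ must send the unordered pair of seam leaves of $\mathcal{F}_i$ to that of $\mathcal{F}_k$, i.e.\ $\phi(\{\Delta_0,\Delta_{2i+2}\})=\{\Delta_0,\Delta_{2k+2}\}$. These two leaves cut $S$ into three connected pieces: a \emph{cap}, being just the interior of $\OO_\omega$; a \emph{finite middle strip}, precisely the union of the $2i+2$ octants $\OO_0,\OO_1,\dots,\OO_{2i+1}$; and an \emph{infinite tail} consisting of $\OO_{2i+2}\cup\OO_{2i+3}\cup\dots$ accumulating on $\Delta_\omega$. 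The cap is topologically distinguished as the unique piece homeomorphic to the interior of a single octant, and the infinite tail as the unique piece that fails to be Lindel\"of at its $\Delta_\omega$-end (or alternatively, as the only piece accumulating on a further non-leaf curve of the foliation), so $\phi$ must match middle strip with middle strip.

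Finally, the value of $i$ is read off from the middle strip. Picking a transverse arc $\tau$ inside the middle strip joining a point of $\Delta_0$ to a point of $\Delta_{2i+2}$, the number of intermediate seams $\Delta_1,\Delta_2,\dots,\Delta_{2i+1}$ crossed by $\tau$ equals $2i+1$; equivalently, this is the number of times the induced foliation on the middle strip switches between its locally horizontal and locally vertical product-foliated blocks. This count is manifestly a homeomorphism invariant of the foliated middle strip and differs between $\mathcal{F}_i$ and $\mathcal{F}_k$ when $i\ne k$, yielding the contradiction. The main obstacle will be step one: producing a clean intrinsic topological characterisation of the two seam leaves, since locally inside their H-foliated octants they look exactly like any other horizontal leaf. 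A natural way around this is to exploit separating properties in the simply connected $\omega$-bounded surface $S$---each candidate leaf separates $S$, and only the two seam leaves separate off either a single-octant piece or a piece bounded by another seam leaf on both sides---combined with a holonomy argument along a short transversal showing that the seam leaves are exactly those at which the saturated product neighbourhood extends without a Poincar\'e--Bendixson-style interruption at a diagonal.
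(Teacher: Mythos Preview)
Your plan differs substantially from the paper's and carries a genuine gap. The paper never tries to isolate special leaves of $\mathcal{F}_i$; instead it proves a rigidity statement about the bare surface $S$: \emph{every} self-homeomorphism $h$ of $S$ must send $\Delta_\omega$, and each $2$-sided diagonal $\Delta_{2m+1}$, to itself on a closed unbounded set. The argument rests on how copies of $\omega_1$ can sit inside an octant (either eventually in a single horizontal, or meeting the diagonal on a club) together with the fact that $\Delta_\omega$ is the unique diagonal to which the others accumulate. A bijection-and-monotonicity argument then forces $h$ to fix each odd-indexed $\Delta_m$ individually (up to a club), whence the lemma: $h$ essentially preserves the wedge $\OO_{2i+1}\cup\OO_{2i+2}$ bounded by $\Delta_{2i+1}$ and $\Delta_{2i+3}$, on which $\mathcal{F}_i$ is horizontal while $\mathcal{F}_k$ (for $k\ne i$) is vertical.

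Your sketch also contains a concrete error in the geometry of $S$. All of the $\Delta_k$ share the common vertex of the octant fan, so $\Delta_0$ and $\Delta_{2i+2}$ are not disjoint; in fact $\Delta_0\cup\Delta_{2i+2}$ is the \emph{single} leaf of $\mathcal{F}_i$ through the origin (recall all leaves here are long lines). Removing it yields only two components, not three: your ``cap'' $\OO_\omega$ and your ``infinite tail'' remain joined across $\Delta_\omega$, which is simultaneously the diagonal boundary of $\OO_\omega$ and the accumulation locus of the tail octants. More seriously, your acknowledged obstacle---an intrinsic characterisation of this seam leaf---is real, and the suggested cures do not obviously work: every leaf of $\mathcal{F}_i$ is a long line separating $S$ into two \emph{saturated} halves, and every leaf has a trivial product neighbourhood, so neither separation nor holonomy singles out the seam. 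The paper's route sidesteps this entirely by working with the \emph{transverse} diagonals $\Delta_{2m+1}$, whose individual invariance under $h$ follows from the topology of $S$ alone, with no reference to any particular $\mathcal{F}_i$.
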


\proof[Sketch of a proof]
We shall not give all the details since \cite{BaillifArxiv} contains many similar proofs. We say that some set in $S$ is unbounded if it is not contained in some $S_\alpha$.

The first step is to check that a homeomorphism $h$ must send $\Delta_\omega$ mainly to itself, that is, its image must intersect it on a closed unbounded set. This is because any copy of $\LL_+$ must intersect at least one of the $\OO_i$ ($i=0,1,\dots,\omega$) unboundedly. It is well-known that a copy of $\omega_1$ in the octant must either be contained in a horizontal (except possibly countably many points), or it intersects the diagonal on a closed unbounded set. Thus, $h$ will send a closed unbounded set of $\Delta_\omega$ into a horizontal or a diagonal of some $\OO_i$. But since these horizontals and diagonals do not have the same topological properties as $\Delta_\omega$ (to which others accumulate), the only possibility is as claimed.

By the same kind of arguments, one sees that the image of a $2$-sided diagonal by $h$ must intersect unboundedly another $2$-sided diagonal. It follows that the image of each $\Delta_i$ for $i$ odd intersects some $\Delta_{j(i)}$ (with $j(i)$ odd) unboundedly. Two closed unbounded sets of a diagonal always intersect, thus $j(i)\not= j(i')$ if $i\not= i'$. Moreover for each $j$ there is $i$ with $j=j(i)$. It follows that $j$ is increasing in $i$, and hence that $j(i)=i$.

Thus, a homeomorphism can basically just shake the $\Delta_i$'s (for odd $i$ and $i=\omega$), but not interchange them. The lemma follows immediately.
\endproof

\begin{rem}
{\rm The same kind of construction is possible but piling up $\omega_1$ copies of the octant instead of countably many. The resulting surface would then have at least $\omega_1$ foliations up to homeomorphism. Here, we have shown that $S$ has at least countably many foliations, but in fact there are probably no more, since an investigation similar to the one made for $\LL^2$ in \cite{BGG} should be possible, yielding that the given foliations are the only ones, up to homeomorphism. However we did not pursue this investigation.}
\end{rem}



\end{document}